\DeclareMathOperator{\Max}{Max}
\DeclareMathOperator{\Min}{Min}
\newtheorem{theorem}{Theorem}[section]
\newtheorem{definition}[theorem]{Definition}
\newtheorem{lemma}[theorem]{Lemma}
\newtheorem{proposition}[theorem]{Proposition}
\newtheorem{remark}[theorem]{Remark}
\newtheorem{example}[theorem]{Example}
\newtheorem{corollary}[theorem]{Corollary}
\title{Properties and special filters of pseudocomplemented posets}
\author{Ivan~Chajda and Helmut~L\"anger}
\date{}
\begin{document}

\footnotetext{Support of the research of the first author by IGA, project P\v rF~2023~010, and of the second author by the Austrian Science Fund (FWF), project I~4579-N, entitled ``The many facets of orthomodularity'', is gratefully acknowledged.}

\maketitle

\begin{abstract}
Investigating the structure of pseudocomplemented lattices started ninety years ago with papers by V.~Glivenko, G.~Birkhoff and O.~Frink and this structure was essentially developed by G.~Gr\"atzer. In recent years, some special filters in pseudocomplemented and Stone lattices have been studied by M.~Sambasiva~Rao. However, in some applications, in particular in non-classical logics with unsharp logical connectives, pseudocomplemented posets instead of lattices are used. This motivated us to develop an algebraic theory of pseudocomplemented posets, i.e.\ we derive identities and inequalities holding in such posets and we use them in order to characterize the so-called Stone posets. Then we adopt several concepts of special filters and we investigate their properties in pseudocomplemented posets. Moreover, we show how properties of these filters influence algebraic properties of the underlying pseudocomplemented posets.
\end{abstract}

{\bf AMS Subject Classification:} 06A11, 06A15, 06D15

{\bf Keywords:} Poset, pseudocomplemented poset, Stone poset, Stone identity, filter, coherent filter, strongly coherent filter, closed filter, prime filter, median filter

\section{Introduction}

Apart from V.~Glivenko's early work \cite{Gl} from 1929, the study of pseudocomplemented lattices started in 1957 with a solution of problem~70 of the second edition \cite B of G.~Birkhoff's monograph on lattice theory which provided a characterization of Stone lattices by minimal prime ideals, see \cite{GS}. The idea of a triple was conceived by G~.Gr\"atzer in 1961, see e.g.\ the monograph \cite{Gr}, as a method for proving O.~Frink's conjecture on the representation of Stone lattices, see \cite F. More on pseudocomplemented and Stone lattices can be found in \cite{Sp} and \cite{Sz}.

Recently M.~Sambasiva~Rao \cite{SR} studied so-called median filters in pseudocomplemented distributive lattices. Other results on filters in pseudocomplemented posets can be found in \cite H.

In connection with non-classical logics the natural question arises if the theory of pseudocomplemented lattices can be extended to posets. (For another paper on pseudocomplemented posets cf.\ paper \cite H by R.~Hala\v s.) The authors already published papers treating this topic, see e.g.\ \cite{CHK}, \cite{CL21} and \cite{CL23}. A characterization of pseudocomplemented posets by triples similar to that for lattices mentioned above was settled for particular cases by the first author and R.~Hala\v s in \cite{CH}. For relative pseudocomplementation in posets, see also \cite{CL18} and \cite{CL20}. Moreover, a certain generalization of pseudocomplementation in posets can be found in \cite{CLP}.

The aim of this paper is two-fold.

At first, we collect identities known to be valid in pseudocomplemented and Stone lattices and establish their counterparts in pseudocomplemented and Stone posets. It is evident that not all of these identities can be transferred to posets, but a lot can be settled and proved. Section~3 contains an algebraic theory of pseudocomplemented and Stone posets which, to the authors' knowledge, was not published before.

The second part of the paper (Section~4) is devoted to several special filters in pseudocomplemented and Stone posets. We investigate relationships between these filters and connections between such filters and the structure of the underlying posets.

\section{Basic concepts}

In the following we identify singletons with their unique element, i.e.\ we will often write $x$ instead of $\{x\}$.

Let $(P,\leq)$ be a poset, $a,b\in P$ and $A,B\subseteq P$. We define
\begin{align*}
A\le B & :\Leftrightarrow x\le y\text{ for all }xin A\text{ and all }y\in B, \\
  L(A) & :=\{x\in P\mid x\leq A\}, \\
  U(A) & :=\{x\in P\mid A\leq x\}. 
\end{align*}
Instead of $L(\{a\})$, $L(\{a,b\})$, $L(A\cup\{b\})$, $L(A\cup B)$ and $L\big(U(A)\big)$ we simply write $L(a)$, $L(a,b)$, $L(A,b)$, $L(A,B)$ and $LU(A)$, respectively. Analogously we proceed in similar cases. The expression $a\vee b$ means that the supremum of $a$ and $b$ exists and that it is equal to $a\vee b$. Dually, $a\wedge b$ means that the infimum of $a$ and $b$ exists and that it is equal to $a\wedge b$. A {\em poset} $(P,\leq,0)$ with bottom element $0$ is called {\em pseudocomplemented} if for every $x\in P$ there exists a greatest element $x^*$ of $P$ satisfying $L(x,x^*)=0$ which is equivalent to $x\wedge x^*=0$. This element $x^*$ is called the {\em pseudocomplement} of $x$. We then write $(P,\leq,{}^*,0)$. It is easy to see that $1:=0^*$ is the top element of $(P,\leq)$. Hence any pseudocomplemented poset is bounded and we write $(P,\leq,{}^*,0,1)$. If $(P,\leq,{}^*,0,1)$ is a pseudocomplemented poset and $a,b\in P$ then it is elementary to check that the following holds:
\begin{enumerate}[(i)]
\item $a\leq b$ implies $b^*\leq a^*$,
\item $a\leq a^{**}$,
\item $a^{***}=a^*$,
\item $a\le b^*$ if and only if $L(a,b)=0$ if and only if $b\le a^*$.
\end{enumerate}
An element $a$ of $P$ is called {\em dense} if $a^*=0$. Let $D$ denote the set of all dense elements of $P$. Of course, $1\in D$.

\begin{lemma}\label{lem1}
	Let $(P,\le,{}^*,0,1)$ be a pseudocomplemented poset and $a,b\in P$. Then the following holds:
	\begin{enumerate}[{\rm(i)}]
		\item $U(a^*,b^*)=1$ implies $L(a,b)=0$,
		\item $\big(L(a,b)\big)^*\subseteq U(a^*,b^*)$,
		\item $\big(U(a,b)\big)^*\subseteq L(a^*,b^*)$,
		\item $U(a,a^*)\subseteq D$.
	\end{enumerate}
\end{lemma}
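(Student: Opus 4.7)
The plan is to reduce all four parts to antitonicity of the pseudocomplement (property~(i) of the list preceding the lemma), combined with $a\le a^{**}$, $a^{***}=a^*$, and the auxiliary fact $1^*=0$. In each case I pick an arbitrary element of the set on the left-hand side and apply antitonicity to place its pseudocomplement in the set on the right-hand side.

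\textbf{Parts (ii) and (iii).} For~(ii), pick $c\in L(a,b)$; then $c\le a$ and $c\le b$, so antitonicity yields $a^*\le c^*$ and $b^*\le c^*$, i.e.\ $c^*\in U(a^*,b^*)$. Part~(iii) is the order-dual: $c\in U(a,b)$ forces $c^*\le a^*$ and $c^*\le b^*$, so $c^*\in L(a^*,b^*)$. Both arguments are one line each.

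\textbf{Part (iv).} Let $c\in U(a,a^*)$. Antitonicity gives $c^*\le a^*$ and $c^*\le a^{**}$. Property~(iii) preceding the lemma ($a^{***}=a^*$) says that $a^{**}$ is the pseudocomplement of $a^*$, so $L(a^*,a^{**})=0$ by the very definition of pseudocomplementation. Hence $c^*\in L(a^*,a^{**})=\{0\}$, i.e.\ $c^*=0$, meaning $c\in D$.

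\textbf{Part (i), the most delicate step.} Fix any $c\in L(a,b)$. The argument of (ii) shows $c^*\in U(a^*,b^*)=\{1\}$, so $c^*=1$. I then need the small auxiliary fact $1^*=0$: since $1$ is the top element, $L(1,1^*)=L(1^*)$, and the latter equals $\{0\}$ because $L(1,1^*)=0$ by the definition of the pseudocomplement of $1$; taking $1^*\in L(1^*)$ forces $1^*=0$. With this in hand, $c\le c^{**}=1^*=0$ from property~(ii) preceding the lemma gives $c=0$, so $L(a,b)=0$. The only nontrivial ingredient beyond antitonicity is this bookkeeping observation $1^*=0$; it is the main potential obstacle, and once it is isolated, the rest of the proof is a uniform chain of antitonicity applications.
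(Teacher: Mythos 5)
Your proof is correct and follows essentially the same route as the paper's: antitonicity for (ii) and (iii), the observation $c^*\in L(a^*,a^{**})=0$ for (iv), and the chain $c^*\in U(a^*,b^*)=1$, hence $c\le c^{**}=1^*=0$, for (i). The only difference is that you spell out the auxiliary fact $1^*=0$, which the paper uses silently.
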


\begin{proof}
\
\begin{enumerate}[(i)]
	\item[(i)] If $c\in L(a,b)$ then $c^*\in U(a^*,b^*)=1$ and hence $c\le c^{**}=0$ which implies $c=0$.
	\item[(ii)] and (iii) follow from the fact that $^*$ is antitone.
	\item[(iv)] If $b\in U(a,a^*)$ then $b^*\in L(a^*,a^{**})=0$.
\end{enumerate}
\end{proof}

We define
\begin{align*}
A\le_1B & :\Leftrightarrow\text{ for every }a\in A\text{ there exists some }b\in B\text{ with }a\le b, \\
A\le_2B & :\Leftrightarrow\text{ for every }b\in B\text{ there exists some }a\in A\text{ with }a\le b, \\
  A=_1B & :\Leftrightarrow A\le_1B\text{ and }B\le_1A, \\
  A=_2B & :\Leftrightarrow A\le_2B\text{ and }B\le_2A, \\
	A^* & :=\{a^*\mid a\in A\}.
\end{align*}
It is easy to see that
\begin{align*}
 A\le B & \text{ implies }B^*\le A^*, \\
A\le_1B & \text{ implies }B^*\le_2A^*, \\
A\le_2B & \text{ implies }B^*\le_1A^*, \\
  A=_1B & \text{ implies }A^*=_2B^*, \\
  A=_2B & \text{ implies }A^*=_1B^*.
\end{align*}

In the sequel $\Max A$ and $\Min A$ denotes the set of all maximal and minimal elements of $A$, respectively. Let $\mathbf P=(P,\leq)$ be a poset. $\mathbf P$ is said to satisfy the {\em Ascending Chain Condition} (ACC, for short) if it has no infinite ascending chains and it is said to satisfy the {\em Descending Chain Condition} (DCC, for short) if it has no infinite descending chains. In particular, every finite poset satisfies both the ACC and DCC. Note that if $\mathbf P$ satisfies the ACC then every non-empty subset $A$ of $P$ has at least one maximal element, i.e.\ the set $\Max A$ is non-empty, and if $\mathbf P$ satisfies the DCC then every non-empty subset $A$ of $P$ has at least one minimal element, i.e.\ the set $\Min A$ is non-empty. If $\mathbf P$ satisfies the ACC and $A\subseteq B$ then $A\le_1\Max B$. If $\mathbf P$ satisfies the DCC and $A\subseteq B$ then $\Min B\le_2A$. It is evident that the sets $\Max A$ and $\Min A$ are antichains in $\mathbf P$.

\begin{example}
	The poset visualized in Figure~1
		
\vspace*{-3mm}

\begin{center}
	\setlength{\unitlength}{7mm}
	\begin{picture}(4,8)
		\put(2,1){\circle*{.3}}
		\put(1,3){\circle*{.3}}
		\put(3,3){\circle*{.3}}
		\put(1,5){\circle*{.3}}
		\put(3,5){\circle*{.3}}
		\put(2,7){\circle*{.3}}
		\put(2,1){\line(-1,2)1}
		\put(2,1){\line(1,2)1}
		\put(1,3){\line(0,1)2}
		\put(1,3){\line(1,1)2}
		\put(3,3){\line(-1,1)2}
		\put(3,3){\line(0,1)2}
		\put(2,7){\line(-1,-2)1}
		\put(2,7){\line(1,-2)1}
		\put(1.85,.3){$0$}
		\put(.35,2.85){$a$}
		\put(3.4,2.85){$b$}
		\put(.35,4.85){$c$}
		\put(3.4,4.85){$d$}
		\put(1.85,7.4){$1$}
		\put(1.2,-.75){{\rm Fig.~1}}
		\put(-1.3,-1.75){Pseudocomplemented poset}
	\end{picture}
\end{center}

\vspace*{8mm}
	
is pseudocomplemented, but not a lattice. We have
\[
\begin{array}{l|rrrrrr}
	x      & 0 & a & b & c & d & 1 \\
	\hline
	x^*    & 1 & b & a & 0 & 0 & 0 \\
	\hline
	x^{**} & 0 & a & b & 1 & 1 & 1
\end{array}
\]
and $D=\{c,d,1\}$.
\end{example}

\begin{example}
	The poset depicted in Figure~2
	
	\vspace*{-3mm}
	
	\begin{center}
		\setlength{\unitlength}{7mm}
		\begin{picture}(8,8)
			\put(4,1){\circle*{.3}}
			\put(3,3){\circle*{.3}}
			\put(5,3){\circle*{.3}}
			\put(1,5){\circle*{.3}}
			\put(3,5){\circle*{.3}}
			\put(5,5){\circle*{.3}}
			\put(7,5){\circle*{.3}}
			\put(4,7){\circle*{.3}}
			\put(4,1){\line(-1,2)1}
			\put(4,1){\line(1,2)1}
			\put(3,3){\line(-1,1)2}
			\put(3,3){\line(0,1)2}
			\put(3,3){\line(1,1)2}
			\put(5,3){\line(-1,1)2}
			\put(5,3){\line(0,1)2}
			\put(5,3){\line(1,1)2}
			\put(4,7){\line(-3,-2)3}
			\put(4,7){\line(-1,-2)1}
			\put(4,7){\line(1,-2)1}
			\put(4,7){\line(3,-2)3}
			\put(3.85,.3){$0$}
			\put(2.35,2.85){$a$}
			\put(5.4,2.85){$b$}
			\put(.35,4.85){$c$}
			\put(2.35,4.85){$d$}
			\put(5.4,4.85){$e$}
			\put(7.4,4.85){$f$}
			\put(3.85,7.4){$1$}
			\put(3.2,-.75){{\rm Fig.~2}}
			\put(.7,-1.75){Pseudocomplemented poset}
		\end{picture}
	\end{center}
	
	\vspace*{8mm}
	
	is pseudocomplemented, but not a lattice. We have
	\[
	\begin{array}{l|rrrrrrrr}
		x      & 0 & a & b & c & d & e & f & 1 \\
		\hline
		x^*    & 1 & f & c & f & 0 & 0 & c & 0 \\
		\hline
		x^{**} & 0 & c & f & c & 1 & 1 & f & 1
	\end{array}
	\]
	and $D=\{d,e,1\}$.
\end{example}

\begin{remark}
	As mentioned above, the sets $\Max L(x,y)$ and $\Min U(x,y)$ are antichains for all $x,y\in P$. However, if a subset $A$ is an antichain then $A^*$ need not be an antichain. For example, the set $\{c,d\}$ in the poset visualized in Fig.~2 is an antichain but $\{c,d\}^*=\{c^*,d^*\}=\{0,f\}$ is not an antichain. Even the set $\big(\Max L(x,y)\big)^*$ need not be an antichain, see the following example.
\end{remark}

\begin{example}
The poset depicted in Figure~3

\vspace*{-3mm}

\begin{center}
	\setlength{\unitlength}{7mm}
	\begin{picture}(6,10)
		\put(3,1){\circle*{.3}}
		\put(2,3){\circle*{.3}}
		\put(4,3){\circle*{.3}}
		\put(3,5){\circle*{.3}}
		\put(5,5){\circle*{.3}}
		\put(1,7){\circle*{.3}}
		\put(3,7){\circle*{.3}}
		\put(5,7){\circle*{.3}}
		\put(3,9){\circle*{.3}}
		\put(3,1){\line(-1,2)1}
		\put(3,1){\line(1,2)2}
		\put(2,3){\line(-1,4)1}
		\put(2,3){\line(1,2)1}
		\put(4,3){\line(-1,2)1}
		\put(3,5){\line(0,1)4}
		\put(3,5){\line(1,1)2}
		\put(5,5){\line(-1,1)2}
		\put(5,5){\line(0,1)2}
		\put(3,9){\line(-1,-1)2}
		\put(3,9){\line(1,-1)2}
		\put(2.85,.3){$0$}
		\put(1.35,2.85){$a$}
		\put(4.4,2.85){$b$}
		\put(2.35,4.85){$c$}
		\put(5.4,4.85){$d$}
		\put(.35,6.85){$e$}
		\put(2.35,6.85){$f$}
		\put(5.4,6.85){$g$}
		\put(2.85,9.4){$1$}
		\put(2.2,-.75){{\rm Fig.~3}}
		\put(-.3,-1.75){Pseudocomplemented poset}
	\end{picture}
\end{center}

\vspace*{8mm}

is pseudocomplemented, but not a lattice. We have
\[
\begin{array}{l|rrrrrrrrr}
	x      & 0 & a & b & c & d & e & f & g & 1 \\
	\hline
	x^*    & 1 & d & e & 0 & e & d & 0 & 0 & 0 \\
	\hline
	x^{**} & 0 & e & d & 1 & d & e & 1 & 1 & 1
\end{array}
\]
and $D=\{c,f,g,1\}$. We have
\[
\big(\Max L(f,g)\big)^*=(\{c,d\})^*=\{c^*,d^*\}=\{0,e\}
\]
which is not an antichain.
\end{example}

If $(P,\le)$ is a poset and $A$ a subset of $P$ and if $L(x^*,y)\neq0$ and $L(x,y^*)\neq0$ for all $x,y\in A$ with $x\neq y$ then both $A$ and $A^*$ are antichains, for instance if there would exist $a,b\in A$ with $a<b$ then $L(a,b^*)\subseteq L(b,b^*)=0$ and hence we would obtain $L(a,b^*)=0$, a contradiction. The other cases can be treated analogously.
	
\begin{remark}\label{rem1}
	Let $\mathbf P=(P,\le)$ be a poset, $A$ an antichain of $\mathbf P$ and $B\subseteq P$ and assume $A=_1B$. Then $A\subseteq B$. This can be seen as follows: Assume $a\in A$. Since $A\le_1B$ there exists some $b\in B$ with $a\le b$. Since $B\le_1A$ there exists some $c\in A$ with $b\le c$. Because of transitivity we have $a\le c$. Since $A$ is an antichain of $\mathbf P$ we conclude $a=c$ and hence $a=b\in B$. This shows $A\subseteq B$. In a similar way one can prove that $A\subseteq B$ provided $A=_2B$ instead of $A=_1B$. If both $A$ and $B$ are antichains of $\mathbf P$ and either $A=_1B$ or $A=_2B$, we obtain $A=B$. If $A=_2B$ then $A=1$ if and only if $B=1$. This can be seen as follows: Assume $A=1$. Then $B=_21$. Because of $B\le_21$ we have $B\ne\emptyset$. On the other hand, $1\le_2B$ implies $B=1$. The converse implication follows by symmetry.
\end{remark}

\section{Properties of pseudocomplements in bounded posets}

In this section we present identities and inequalities holding in pseudocomplemented posets and compare these identities and inequalities with the corresponding identities holding in pseudocomplemented lattices.

We start with a list of several important properties of bounded pseudocomplemented posets.

\begin{theorem}\label{th3}
	Let $\mathbf P=(P,\le,{}^*,0,1)$ be a pseudocomplemented poset. If $\mathbf P$ satisfies the {\rm ACC} then the following holds:
	\begin{enumerate}[{\rm(i)}]
		\item $\Max L(x^*,y^*)\approx_1\big(\Max L(x^*,y^*)\big)^{**}$,
		\item $\Max L\Big(x,\big(\Max L(x^*,y)\big)^*\Big)\approx x$,
		\item $\Max L(x,y)\leq_1\Max L(x^{**},y)\leq_1\Max L(x^{**},y^{**})$ for all $x,y\in P$,
		\item $\Max L(x,y)\leq_1\Max L(x,y^{**})\leq_1\Max L(x^{**},y^{**})$ for all $x,y\in P$,
		\item $\big(\Max L(x,y)\big)^{**}\leq_1\Max L(x^{**},y^{**})$ for all $x,y\in P$.
	\end{enumerate}
	If $\mathbf P$ satisfies the {\rm DCC} then the following holds:
	\begin{enumerate}
		\item[{\rm(vi)}] $\Min U(x,y)\leq_2\Min U(x^{**},y)\leq_2\Min U(x^{**},y^{**})$ for all $x,y\in P$,
		\item[{\rm(vii)}] $\Min U(x,y)\leq_2\Min U(x,y^{**})\leq_2\Min U(x^{**},y^{**})$ for all $x,y\in P$,
		\item[{\rm(viii)}] $\Min U(x^{**},y^{**})\le_2\big(\Min U(x,y)\big)^{**}$ for all $x,y\in P$.
	\end{enumerate}
	If $\mathbf P$ satisfies both the {\rm ACC} and the {\rm DCC} then the following holds:
	\begin{enumerate}
		\item[{\rm(ix)}] $\big(\Min U(x,y)\big)^*\approx_1\Max L(x^*,y^*)$,
		\item[{\rm(x)}] $\Min U(x^*,y^*)\le_2\big(\Max L(x,y)\big)^*$ for all $x,y\in P$.
	\end{enumerate}
\end{theorem}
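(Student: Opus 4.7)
Every item in the theorem has the form $A\le_i B$ (or an equivalence built from two such inequalities), where $A$ and $B$ are singletons, $\Max L(\cdots)$, $\Min U(\cdots)$, or images of these under $^*$ or $^{**}$. The plan is to reduce each claim to a pointwise membership of the form $a\in L(\cdots)$ or $b\in U(\cdots)$, and then invoke ACC or DCC through the two implications recorded at the end of Section~2: if $A\subseteq B$ and ACC holds then $A\le_1\Max B$, and dually if DCC holds then $\Min B\le_2 A$. The algebraic tools I keep reusing are antitonicity of $^*$, the inequality $a\le a^{**}$, the identity $a^{***}=a^*$, and the Galois-type equivalence $a\le b^*\Leftrightarrow L(a,b)=0\Leftrightarrow b\le a^*$.

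\textbf{Routine cases (iii)--(viii).} For (iii), if $a\in\Max L(x,y)$ then $a\le x\le x^{**}$ and $a\le y$, so $a\in L(x^{**},y)$; ACC then supplies some $b\in\Max L(x^{**},y)$ with $a\le b$, and replacing $y$ by $y^{**}$ in the same step completes the chain. Parts (iv), (vi), (vii) are carried out identically, with $\le_2$ and DCC replacing $\le_1$ and ACC for the $U$-versions. For the ``double-star'' inequalities (v) and (viii), antitonicity promotes $a\le x$ to $a^{**}\le x^{**}$ (similarly for $y$), placing $a^{**}$ in $L(x^{**},y^{**})$ and letting ACC (resp.\ DCC for the $U$-dual) conclude.

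\textbf{The more delicate parts (i), (ii), (ix), (x).} In (i), one direction is immediate from $a\le a^{**}$; for the reverse one uses $a^{***}=a^*$ to show that whenever $a\le x^*,y^*$ one also has $a^{**}\le x^{***}=x^*$ and $a^{**}\le y^*$, so $a^{**}$ again lies in $L(x^*,y^*)$ and ACC applies. For (ii), the only nontrivial inclusion asks $x\le c^*$ for every $c\in\Max L(x^*,y)$; this follows because $c\le x^*$ forces $L(x,c)\subseteq L(x,x^*)=0$, after which $x$ itself is a member of $L\bigl(x,(\Max L(x^*,y))^*\bigr)$ that ACC lifts to its maximum, and the reverse inclusion is obvious. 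Item (x) is a direct application of antitonicity coupled with DCC. Part (ix) combines both directions: one inclusion mirrors (x), while the converse uses the Galois equivalence to turn $c\le x^*,y^*$ into $c^*\in U(x,y)$, whence DCC yields $d\in\Min U(x,y)$ with $d\le c^*$, and then $c\le c^{**}\le d^*\in\bigl(\Min U(x,y)\bigr)^*$ provides the witness ACC requires.

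\textbf{Main obstacle.} The real difficulty is bookkeeping: applying $^*$ swaps $\le_1$ with $\le_2$, so one must alternate between ACC-based and DCC-based witnesses without losing track of which side of the Galois correspondence one is on. I expect (ix) to be the trickiest item precisely because it uses both chain conditions simultaneously, and (ii) to need a small additional argument because the right-hand side is hidden inside a doubly nested $\Max L$ construction.
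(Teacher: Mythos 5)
Your proposal is correct and follows essentially the same route as the paper: each item is reduced to a set inclusion or a pointwise membership and then settled by the ACC/DCC facts from Section~2 together with antitonicity of $^*$, $x\le x^{**}$, $x^{***}=x^*$ and the equivalence $a\le b^*\Leftrightarrow L(a,b)=0\Leftrightarrow b\le a^*$. The only cosmetic difference is that you prove (v) and (viii) directly by applying antitonicity twice, whereas the paper obtains them by combining earlier items of the theorem.
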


\begin{proof}
	Let $a,b\in P$.
	\begin{enumerate}[(i)]
		\item $\Max L(a^*,b^*)\le a^*$ implies
		\[
		\big(\Max L(a^*,b^*)\big)^{**}\le a^{***}=a^*.
		\]
		Analogously, $\big(\Max L(a^*,b^*)\big)^{**}\le b^*$. Hence $\big(\Max L(a^*,b^*)\big)^{**}\subseteq L(a^*,b^*)$ whence $\big(\Max L(a^*,b^*)\big)^{**}\le_1\Max L(a^*,b^*)$. If $c\in\Max L(a^*,b^*)$ then $c\le c^{**}\in\big(\Max L(a^*,b^*)\big)^{**}$. This shows $\Max L(a^*,b^*)\le_1\big(\Max L(a^*,b^*)\big)^{**}$.
		\item We have $\Max L(a^*,b)\le a^*$ and hence $a\le a^{**}\le\big(\Max L(a^*,b)\big)^*$ whence
		\[
		L\Big(a,\big(\Max L(a^*,b)\big)^*\Big)=L(a).
		\]
		\item This follows from $L(a,b)\subseteq L(a^{**},b)\subseteq L(a^{**},b^{**})$.
		\item This follows from $L(a,b)\subseteq L(a,b^{**})\subseteq L(a^{**},b^{**})$.
		\item From (ii) and (i) we obtain $\big(\Max L(a,b)\big)^{**}\leq_1\big(\Max L(a^{**},b^{**})\big)^{**}=_1\Max L(a^{**},b^{**})$.
		\item This follows from $U(a^{**},b^{**})\subseteq U(a^{**},b)\subseteq U(a,b)$.
		\item This follows from $U(a^{**},b^{**})\subseteq U(a,b^{**})\subseteq U(a,b)$.
		\item We have $\Min U(a^{**},b^{**})\leq_2\big(\Max L(a^*,b^*)\big)^*=_2\big(\Min U(a,b)\big)^{**}$ according to (vii) and (vi).
		\item Since $a,b\le\Min U(a,b)$ we have $\big(\Min U(a,b)\big)^*\le a^*,b^*$ and hence $\big(\Min U(a,b)\big)^*\subseteq L(a^*,b^*)$ whence $\big(\Min U(a,b)\big)^*\le_1\Max L(a^*,b^*)$. Conversely, let $c\in\Max L(a^*,b^*)$. Then $c\le a^*,b^*$ and hence $a,b\le c^*$, i.e.\ $c^*\in U(a,b)$. Therefore there exists some $d\in\Min U(a,b)$ with $d\le c^*$. Now $c\le d^*\in\big(\Min U(a,b)\big)^*$ showing $\Max L(a^*,b^*)\le_1\big(\Min U(a,b)\big)^*$.
		\item Let $c\in\big(\Max L(a,b)\big)^*$. Then there exists some $d\in\Max L(a,b)$ with $d^*=c$. Because of $d\le a,b$ we have $a^*,b^*\le d^*$, i.e.\ $d^*\in U(a^*,b^*)$. Hence there exists some $e\in\Min U(a^*,b^*)$ with $e\le d^*$. Therefore $e\le c$.
	\end{enumerate}
\end{proof}

The following result is in fact a corollary of Theorem~\ref{th3}.

\begin{corollary}\label{cor1}
	Let $\mathbf P=(P,\le,{}^*,0,1)$ be a pseudocomplemented poset satisfying the {\rm ACC} and assume
	\begin{enumerate}
	\item[{\rm(1)}] $\big(\Max L(x,y)\big)^*\leq_2\big(\Max L(x^{**},y^{**})\big)^*$ for all $x,y\in P$.
	\end{enumerate}
	Then $\mathbf P$ satisfies the identities
	\begin{enumerate}[{\rm(i)}]
		\item $\big(\Max L(x,y)\big)^*\approx_2\big(\Max L(x^{**},y^{**})\big)^*$,
		\item $\big(\Max L(x,y)\big)^{**}\approx_1\Max L(x^{**},y^{**})$,
		\item $\big(\Max L(x,y)\big)^*\approx_2\big(\Max L(x^{**},y)\big)^*\approx_2\big(\Max L(x,y^{**})\big)^*$.
	\end{enumerate}
\end{corollary}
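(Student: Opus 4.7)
The plan is to derive all three statements as consequences of Theorem~\ref{th3} combined with hypothesis~(1), using repeatedly the elementary facts (listed just before Remark~\ref{rem1}) that $A\le_1B$ implies $B^*\le_2A^*$ and that $A=_2B$ implies $A^*=_1B^*$.

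For (i), Theorem~\ref{th3}(iii) gives $\Max L(x,y)\le_1\Max L(x^{**},y^{**})$, so applying $^*$ yields $\big(\Max L(x^{**},y^{**})\big)^*\le_2\big(\Max L(x,y)\big)^*$. The opposite $\le_2$-inequality is precisely hypothesis~(1), so the two sides coincide under $\approx_2$.

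For (ii), I would start from (i) and apply $^*$ a second time to obtain $\big(\Max L(x,y)\big)^{**}=_1\big(\Max L(x^{**},y^{**})\big)^{**}$. The identity in Theorem~\ref{th3}(i), instantiated with $x$ and $y$ replaced by $x^*$ and $y^*$ (so that $x^{**}$ and $y^{**}$ appear inside $L$), delivers $\Max L(x^{**},y^{**})\approx_1\big(\Max L(x^{**},y^{**})\big)^{**}$. Concatenating the two $=_1$-identities yields (ii).

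For (iii), the chain of Theorem~\ref{th3}(iii), namely $\Max L(x,y)\le_1\Max L(x^{**},y)\le_1\Max L(x^{**},y^{**})$, becomes after applying $^*$
\[
\big(\Max L(x^{**},y^{**})\big)^*\le_2\big(\Max L(x^{**},y)\big)^*\le_2\big(\Max L(x,y)\big)^*.
\]
Composing with the $\le_2$-inequality of (i) closes the loop and forces $\approx_2$-equality at every link; the middle link is the first half of~(iii). The symmetric identity involving $y^{**}$ follows in the same way from Theorem~\ref{th3}(iv). The only thing that needs careful attention is the orientation reversal between $\le_1$ and $\le_2$ each time $^*$ is applied; beyond that the argument is a routine bookkeeping exercise in the implications recorded before Remark~\ref{rem1}.
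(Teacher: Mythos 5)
Your proposal is correct and follows essentially the same route as the paper: part (i) from hypothesis (1) together with Theorem~\ref{th3}(iii) and the antitonicity of $^*$ with respect to $\le_1$/$\le_2$, part (ii) by applying $^*$ to (i) and invoking Theorem~\ref{th3}(i) with $x,y$ replaced by $x^*,y^*$, and part (iii) by starring the chain $\Max L(x,y)\le_1\Max L(x^{**},y)\le_1\Max L(x^{**},y^{**})$ and closing the loop with (i). No gaps.
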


\begin{proof}
	\
	\begin{enumerate}[(i)]
		\item We have
		\[
		\big(\Max L(a,b)\big)^*\leq_2\big(\Max L(a^{**},b^{**})\big)^*\le_2\big(\Max L(a,b)\big)^*
		\]
		according to the assumption and to (iii) of Theorem~\ref{th3}.
		\item We have
		\[
		\big(\Max L(a,b)\big)^{**}=_1\big(\Max L(a^{**},b^{**})\big)^{**}=_1\Max L(a^{**},b^{**})
		\]
		according to (i) and to (i) of Theorem~\ref{th3}.
		\item We have
		\[
		L(a,b)\subseteq L(a^{**},b)\subseteq L(a^{**},b^{**})
		\]
		and hence
		\[
		\Max L(a,b)\le_1\Max L(a^{**},b)\le_1\Max L(a^{**},b^{**})
		\]
		whence
		\[
		\big(\Max L(a,b)\big)^*=_2\big(\Max L(a^{**},b^{**})\big)^*\le_2\big(\Max L(a^{**},b)\big)^*\le_2\big(\Max L(a,b)\big)^*
		\]
		according to (i). The assertion $\big(\Max L(a,b)\big)^*=_2\big(\Max L(a,b^{**})\big)^*$ follows by symmetry.
	\end{enumerate}
\end{proof}

The assumed inequality (1) is natural since it is satisfied in the case where the poset is a lattice (see Theorem~\ref{th1}). However, it is not satisfied in the poset from Fig.~2 since
\begin{align*}
\big(\Max L(d,e)\big)^* & =(\Max\{0,a,b\})^*=\{a,b\}^*=\{c,f\}\not\le_20=1^*= \\
                        & =(\Max\{0,a,b,c,d,e,f,1\})^*=\big(\Max L(1,1)\big)^*=\big(\Max L(d^{**},e^{**})\big)^*.
\end{align*}
In the sequel we will often deal with Stone posets. We define

\begin{definition}\label{def1}
	A {\em Stone poset} is a pseudocomplemented poset $(P,\le,{}^*,0,1)$ satisfying both the {\rm ACC} and the {\rm DCC} as well as
	\[
	\big(\Max L(x^*,y^*)\big)^*\le_2\Min U(x^{**},y^{**})\text{ for all }x,y\in P.
	\]
\end{definition}

Stone posets can be characterized by identities, see the following result.

\begin{theorem}\label{th5}
	Let $\mathbf P=(P,\le,{}^*,0,1)$ be a pseudocomplemented poset satisfying both the {\rm ACC} and the {\rm DCC}. Then the following are equivalent:
	\begin{enumerate}[{\rm(i)}]
		\item $\mathbf P$ is a Stone poset,
		\item $\mathbf P$ satisfies the identity $\big(\Max L(x^*,y^*)\big)^*\approx_2\Min U(x^{**},y^{**})$,
		\item $\mathbf P$ satisfies the identity $\big(\Min U(x,y)\big)^{**}\approx_2\Min U(x^{**},y^{**})$.
	\end{enumerate}
\end{theorem}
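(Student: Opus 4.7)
The plan is to derive everything from Theorem~\ref{th3}(ix) and (x), using only the chain conditions and the bookkeeping rules for $=_1$, $=_2$, $\le_1$, $\le_2$ under starring that were recorded in Section~2. Throughout, fix arbitrary $a,b\in P$; since both ACC and DCC are assumed, every part of Theorem~\ref{th3} is available.

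First I would dispose of the equivalence (i)$\Leftrightarrow$(ii). The implication (ii)$\Rightarrow$(i) is immediate because $=_2$ refines $\le_2$. For (i)$\Rightarrow$(ii), the Stone condition gives one direction, $\big(\Max L(a^*,b^*)\big)^*\le_2\Min U(a^{**},b^{**})$, so only the reverse $\le_2$ is missing. I would get it by applying Theorem~\ref{th3}(x) with $a$ and $b$ replaced by $a^*$ and $b^*$: this yields
\[
\Min U(a^{**},b^{**})\le_2\big(\Max L(a^*,b^*)\big)^*,
\]
which is exactly what is required. So (i) and (ii) are equivalent.

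Next I would handle (ii)$\Leftrightarrow$(iii) by a single identification. By Theorem~\ref{th3}(ix), $\big(\Min U(a,b)\big)^*=_1\Max L(a^*,b^*)$. Applying the rule $A=_1 B\Rightarrow A^*=_2 B^*$ recorded in Section~2, this upgrades to
\[
\big(\Min U(a,b)\big)^{**}=_2\big(\Max L(a^*,b^*)\big)^*.
\]
Since $=_2$ is a transitive relation (one checks at once from its definition), the statement (ii), which says $\big(\Max L(a^*,b^*)\big)^*=_2\Min U(a^{**},b^{**})$, is equivalent via this identification to (iii), namely $\big(\Min U(a,b)\big)^{**}=_2\Min U(a^{**},b^{**})$. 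Reading the chain forward and backward gives both implications.

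The proof is then essentially bookkeeping, so there is no genuine obstacle. The only point that requires care is making sure the substitution $x\mapsto x^*$, $y\mapsto y^*$ in Theorem~\ref{th3}(x) really delivers the missing half of the Stone identity (using $x^{***}=x^*$ implicitly when one writes the result in the displayed form), and that the $=_1$/$=_2$ conversion under starring is used in the right direction when moving between (ii) and (iii).
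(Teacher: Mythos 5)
Your proposal is correct and follows essentially the same route as the paper: the equivalence (i)$\Leftrightarrow$(ii) is obtained from Theorem~\ref{th3}(x) (with $x,y$ replaced by $x^*,y^*$, using $x^{***}=x^*$), and (ii)$\Leftrightarrow$(iii) from Theorem~\ref{th3}(ix) after starring, exactly as in the paper's proof. The only difference is that you spell out the bookkeeping (the $=_1\Rightarrow{}^*{=_2}$ conversion and transitivity of $=_2$) that the paper leaves implicit.
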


\begin{proof}
	$\text{}$ \\
	(i) $\Leftrightarrow$ (ii): \\
	This follows from (x) of Theorem~\ref{th3}. \\
	(ii) $\Leftrightarrow$ (iii): \\
	According to (ix) of Theorem~\ref{th3} we have $\big(\Min U(x,y)\big)^{**}\approx_2\big(\Max L(x^*,y^*)\big)^*$.
\end{proof}

Assertion (iii) of Theorem~\ref{th5} implies that $\mathbf P$ satisfies the identities
\begin{align*}
	\big(\Min U(x^*,y^*)\big)^{**} & \approx_2\Min U(x^{***},y^{***})\approx\Min U(x^*,y^*), \\
	\big(\Min U(x,y)\big)^* & \approx\big(\Min U(x,y)\big)^{***}\approx_1\big(\Min U(x^{**},y^{**})\big)^*.
\end{align*}

\begin{corollary}
	Let $\mathbf P=(P,\le,{}^*,0,1)$ be a pseudocomplemented poset satisfying the {\rm ACC} and the {\rm DCC} as well as the identity
	\begin{enumerate}
		\item[{\rm(2)}] $\big(\Min U(x^{**},y^{**})\big)^{**}\approx_2\Min U(x^{**},y^{**})$.
	\end{enumerate}
	Then $\mathbf P$ is a Stone poset.
\end{corollary}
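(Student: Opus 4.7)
My strategy is to verify condition~(iii) of Theorem~\ref{th5}; since the hypotheses of that theorem (ACC, DCC, and pseudocomplementation) are in place, establishing the identity $\big(\Min U(x,y)\big)^{**}\approx_2\Min U(x^{**},y^{**})$ will immediately show that $\mathbf P$ is Stone. Fix $a,b\in P$. The inclusion $\Min U(a^{**},b^{**})\le_2\big(\Min U(a,b)\big)^{**}$ is furnished directly by Theorem~\ref{th3}(viii), so only the reverse inclusion requires work.

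To obtain the reverse direction, I start from Theorem~\ref{th3}(vi), which gives $\Min U(a,b)\le_2\Min U(a^{**},b^{**})$, and push the operation $^{**}$ through $\le_2$ to obtain $\big(\Min U(a,b)\big)^{**}\le_2\big(\Min U(a^{**},b^{**})\big)^{**}$. The assumed identity~(2) then collapses the right-hand side to $\Min U(a^{**},b^{**})$, so that transitivity of $\le_2$ yields
\[
\big(\Min U(a,b)\big)^{**}\le_2\big(\Min U(a^{**},b^{**})\big)^{**}\le_2\Min U(a^{**},b^{**}),
\]
which is precisely the missing inclusion.

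The only step that is not immediately on the list of formal properties recorded in Section~2 is the monotonicity of $A\mapsto A^{**}$ with respect to $\le_2$, so I would insert a one-line justification of it before the main computation: applying ``$A\le_2 B\Rightarrow B^*\le_1 A^*$'' once to $A\le_2 B$ gives $B^*\le_1 A^*$, and applying ``$A\le_1 B\Rightarrow B^*\le_2 A^*$'' to that gives $A^{**}\le_2 B^{**}$. I do not expect any genuine obstacle beyond this auxiliary observation; the remainder of the argument is just splicing together Theorem~\ref{th3}(vi), (viii), this monotonicity, and hypothesis~(2), and then invoking Theorem~\ref{th5}(iii).
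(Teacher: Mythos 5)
Your proof is correct, but it takes a genuinely different route from the paper's. You verify characterization (iii) of Theorem~\ref{th5} directly, staying entirely within the $\Min U$ world: you combine Theorem~\ref{th3}(viii) for one direction with Theorem~\ref{th3}(vi), the $\le_2$-monotonicity of $A\mapsto A^{**}$, and hypothesis~(2) for the other. The paper instead targets characterization (ii): it starts from Theorem~\ref{th3}(ix), $\big(\Min U(x,y)\big)^*\approx_1\Max L(x^*,y^*)$, applies $^*$ to get $\big(\Min U(x,y)\big)^{**}\approx_2\big(\Max L(x^*,y^*)\big)^*$, substitutes $x^{**},y^{**}$ for $x,y$ and uses $x^{***}=x^*$ to identify the right-hand side with $\big(\Max L(x^*,y^*)\big)^*$, and then collapses the left-hand side via~(2) to land on $\Min U(x^{**},y^{**})\approx_2\big(\Max L(x^*,y^*)\big)^*$. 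The paper's argument is slightly shorter because the substitution trick does all the work at once, exploiting the $\Min U$/$\Max L$ duality encoded in (ix); yours avoids $\Max L$ altogether and uses only the DCC-dependent items (vi) and (viii) of Theorem~\ref{th3}, at the modest cost of the auxiliary observation that $A\le_2B$ implies $A^{**}\le_2B^{**}$ --- which you correctly derive from the two displayed implications in Section~2. Both arguments are sound; since Theorem~\ref{th5} already establishes (ii) $\Leftrightarrow$ (iii), the two endpoints are interchangeable.
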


\begin{proof}
	By (ix) of Theorem~\ref{th3}, $\mathbf P$ satisfies the identity
	\[
	\big(\Min U(x,y)\big)^*\approx_1\Max L(x^*,y^*)
	\]
	and hence also the identity
	\[
	\big(\Min U(x,y)\big)^{**}\approx_2\big(\Max L(x^*,y^*)\big)^*.
	\]
	Replacing $x$ by $x^{**}$ and $y$ by $y^{**}$ we obtain the identities
	\[
	\big(\Min U(x^{**},y^{**})\big)^{**}\approx_2\big(\Max L(x^{***},y^{***})\big)^*=_1\big(\Max L(x^*,y^*)\big)^*.
	\]
	Applying identity (2) we get the identity
	\[
	\Min U(x^{**},y^{**})\approx_2\big(\Max L(x^*,y^*)\big)^*
	\]
	which by (ii) of Theorem~\ref{th5} is equivalent to $\mathbf P$ being a Stone poset.
\end{proof}

Recall the following well-known concept. A {\em Stone lattice} is a distributive pseudocomplemented lattice $(L,\vee,\wedge,{}^*,0,1)$ satisfying the identity $x^*\vee x^{**}\approx1$.

In order to justify the name Stone posets for the posets defined in Definition~\ref{def1} we prove that such posets satisfy the identity $U(x^*,x^{**})\approx1$ which is equivalent to $x^*\vee x^{**}\approx1$. Hence such posets satisfy the same identity as Stone lattices.

\begin{lemma}\label{lem4}
	Let $\mathbf P=(P,\le,{}^*,0,1)$ be a Stone poset. Then $\mathbf P$ satisfies the identity
	\begin{enumerate}
	\item[{\rm(3)}] $U(x^*,x^{**})\approx1$.
	\end{enumerate}
\end{lemma}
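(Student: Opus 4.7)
The overall strategy is to specialize the Stone identity of Theorem~\ref{th5}(ii), namely $\big(\Max L(x^*,y^*)\big)^*\approx_2\Min U(x^{**},y^{**})$, at the value $y=x^*$. Using the triple-negation identity $x^{***}=x^*$, this substitution turns the identity into
\[
\big(\Max L(x^*,x^{**})\big)^*=_2\Min U(x^*,x^{**}),
\]
so the work reduces to evaluating the left-hand side and then promoting a conclusion about $\Min U(x^*,x^{**})$ to one about $U(x^*,x^{**})$ itself.

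For the evaluation, I would observe that by definition $x^{**}=(x^*)^*$ is the pseudocomplement of $x^*$, whence the defining property of the pseudocomplement yields $L(x^*,x^{**})=\{0\}$. Consequently $\Max L(x^*,x^{**})=\{0\}$ and its image under ${}^*$ is $\{0^*\}=\{1\}$. Plugging this into the displayed relation gives $\{1\}=_2\Min U(x^*,x^{**})$, and the last clause of Remark~\ref{rem1} (``$A=_2B$ implies $A=1$ iff $B=1$'') then forces $\Min U(x^*,x^{**})=\{1\}$.

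To finish, I would invoke the DCC, which is available since $\mathbf P$ is Stone. The set $U(x^*,x^{**})$ is non-empty (it contains $1$), and for any $c\in U(x^*,x^{**})$ the DCC supplies some $d\in\Min U(x^*,x^{**})$ with $d\le c$; since $\Min U(x^*,x^{**})=\{1\}$, this forces $1\le c$, and combined with $c\le 1$ gives $c=1$. Hence $U(x^*,x^{**})=\{1\}$, i.e.\ the desired identity $U(x^*,x^{**})\approx 1$.

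I do not anticipate a genuine obstacle: the heavy lifting has already been packaged into Theorem~\ref{th5} and Remark~\ref{rem1}. The only delicate step is noticing the right substitution $y=x^*$, which after applying $x^{***}=x^*$ collapses the left-hand side of the Stone identity to the singleton $\{1\}$ via the trivial computation $L(x^*,x^{**})=\{0\}$; the passage from $\Min U$ to $U$ is then a routine use of DCC together with the fact that $1$ is the top element.
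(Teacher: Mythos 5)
Your proposal is correct and follows essentially the same route as the paper: both specialize Theorem~\ref{th5}(ii) so that the $\Max L$ side collapses to $\{0\}^*=\{1\}$ via $L(x^*,x^{**})=0$, then use Remark~\ref{rem1} to conclude $\Min U(x^*,x^{**})=1$ and pass to $U(x^*,x^{**})=1$. The only cosmetic differences are that the paper substitutes $(x,y)\mapsto(x^*,x)$ rather than $y=x^*$ and appeals to the antichain clause of Remark~\ref{rem1} instead of its last clause, while you make the final DCC step explicit where the paper leaves it implicit.
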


\begin{proof}
According to Theorem~\ref{th5} we have
\[
\Min U(x^*,x^{**})\approx\Min U(x^{***},x^{**})\approx_2\big(\Max L(x^{**},x^*\big)^*\approx(\Max0)^*\approx0^*\approx1.
\]
Since both $\Min U(x^*,x^{**})$ and $\{1\}$ are antichains we obtain $\Min U(x^*,x^{**})\approx1$, i.e.\ $U(x^*,x^{**})\approx1$.
\end{proof}

Identity (3) is called the {\em Stone identity}.

\begin{remark}
	Let us recall that a distributive lattice is a Stone lattice if and only if it satisfies the identity
	\begin{enumerate}
		\item[{\rm(4)}] $(x^*\wedge y^*)^*\approx x^{**}\vee y^{**}$
	\end{enumerate}
	{\rm(}see Theorem~\ref{th2}{\rm)}. Since a pseudocomplemented poset satisfying both the {\rm ACC} and the {\rm DCC} is a Stone poset if and only if it satisfies the identity
	\[
	\big(\Max L(x^*,y^*)\big)^*\approx_2\Min U(x^{**},y^{**}),
	\]
	which is nothing else than identity {\rm(4)} translated to posets, we need not assume distributivity within the definition of a Stone poset.
\end{remark}

\begin{example}
	The posets depicted in Figure~4
	
	\vspace*{-3mm}
	
	\begin{center}
		\setlength{\unitlength}{7mm}
		\begin{picture}(4,10)
			\put(2,1){\circle*{.3}}
			\put(2,3){\circle*{.3}}
			\put(1,5){\circle*{.3}}
			\put(3,5){\circle*{.3}}
			\put(1,7){\circle*{.3}}
			\put(3,7){\circle*{.3}}
			\put(2,9){\circle*{.3}}
			\put(2,3){\line(0,-1)2}
			\put(2,3){\line(-1,2)1}
			\put(2,3){\line(1,2)1}
			\put(1,5){\line(0,1)2}
			\put(1,5){\line(1,1)2}
			\put(3,5){\line(-1,1)2}
			\put(3,5){\line(0,1)2}
			\put(2,9){\line(-1,-2)1}
			\put(2,9){\line(1,-2)1}
			\put(1.85,.3){$0$}
			\put(2.4,2.85){$a$}
			\put(.35,4.85){$b$}
			\put(3.4,4.85){$c$}
			\put(.35,6.85){$d$}
			\put(3.4,6.85){$e$}
			\put(1.85,9.4){$1$}
			\put(1.65,-.75){{\rm(a)}}
			\put(5.3,-1){{\rm Fig.~4}}
			\put(3,-2){Non-lattice Stone posets}
		\end{picture}
		\quad\quad\quad
		\setlength{\unitlength}{7mm}
		\begin{picture}(8,10)
			\put(4,1){\circle*{.3}}
			\put(1,3){\circle*{.3}}
			\put(3,3){\circle*{.3}}
			\put(5,3){\circle*{.3}}
			\put(7,3){\circle*{.3}}
			\put(1,5){\circle*{.3}}
			\put(7,5){\circle*{.3}}
			\put(1,7){\circle*{.3}}
			\put(3,7){\circle*{.3}}
			\put(5,7){\circle*{.3}}
			\put(7,7){\circle*{.3}}
			\put(4,9){\circle*{.3}}
			\put(4,1){\line(-3,2)3}
			\put(4,1){\line(-1,2)1}
			\put(4,1){\line(1,2)1}
			\put(4,1){\line(3,2)3}
			\put(1,3){\line(0,1)4}
			\put(1,3){\line(1,1)4}
			\put(3,3){\line(-1,1)2}
			\put(3,3){\line(1,1)4}
			\put(5,3){\line(-1,1)4}
			\put(5,3){\line(1,1)2}
			\put(7,3){\line(-1,1)4}
			\put(7,3){\line(0,1)4}
			\put(3,7){\line(-1,-1)2}
			\put(5,7){\line(1,-1)2}
			\put(4,9){\line(-3,-2)3}
			\put(4,9){\line(-1,-2)1}
			\put(4,9){\line(1,-2)1}
			\put(4,9){\line(3,-2)3}
			\put(3.85,.3){$0$}
			\put(.35,2.85){$a$}
			\put(2.35,2.85){$b$}
			\put(5.4,2.85){$c$}
			\put(7.4,2.85){$d$}
			\put(.35,4.85){$e$}
			\put(7.4,4.85){$f$}
			\put(.35,6.85){$g$}
			\put(2.35,6.85){$h$}
			\put(5.4,6.85){$i$}
			\put(7.4,6.85){$j$}
			\put(3.85,9.4){$1$}
			\put(3.65,-.75){{\rm(b)}}
		\end{picture}
	\end{center}
	
	\vspace*{17mm}
	
	are non-lattice Stone posets. For the poset in Fig.~4 {\rm(a)} we have
	\[
	\begin{array}{l|rrrrrrr}
		x      & 0 & a & b & c & d & e & 1 \\
		\hline
		x^*    & 1 & 0 & 0 & 0 & 0 & 0 & 0 \\
		\hline
		x^{**} & 0 & 1 & 1 & 1 & 1 & 1 & 1
	\end{array}
	\]
	and hence $D=\{a,b,c,d,e,1\}$. For the poset in Fig.~4 {\rm(b)} we have
	\[
	\begin{array}{l|rrrrrrrrrrrr}
		x      & 0 & a & b & c & d & e & f & g & h & i & j & 1 \\
		\hline
		x^*    & 1 & j & i & h & g & f & e & d & c & b & a & 0 \\
		\hline
		x^{**} & 0 & a & b & c & d & e & f & g & h & i & j & 1
	\end{array}
	\]
	and therefore it satisfies the identity $x^{**}\approx x$ and $D=\{1\}$. Moreover, both posets satisfy inequality {\rm(1)}.
\end{example}

\begin{remark}
	Recall that a {\em poset} is called {\em distributive} if it satisfies the identity
\[
L\big(U(x,y),z\big)\approx LU\big(L(x,z),L(y,z)\big).
\]
	The Stone identity and distributivity are independent identities. Namely, the poset from Fig.~1 is distributive but $U(a^*,a^{**})=U(a,b)=\{c,d,1\}\ne1$. On the other hand, the poset in Fig.~2 satisfies the Stone identity but it is not distributive since
	\begin{align*}
		L\big(U(c,d),e\big) & =L(e,1)=L(e)\ne\{0,a,b\}=L(d,e,1)=LU(0,a,b)= \\
		& =LU\big(L(c,e),L(d,e)\big).
	\end{align*}
\end{remark}

One can compare our previous results with well-known properties of pseudocomplemented lattices as found in \cite F -- \cite{Gr}, see the following two theorems.

\begin{theorem}\label{th1}
Every pseudocomplemented lattice satisfies the following identities:
\begin{itemize}
\item $(x\wedge y)^*\approx(x^{**}\wedge y)^*\approx(x\wedge y^{**})^*\approx(x^{**}\wedge y^{**})^*$,
\item $(x\wedge y)^{**}\approx x^{**}\wedge y^{**}$,
\item $x\wedge(x^*\wedge y)^*\approx x$,
\item $(x\vee y)^*\approx x^*\wedge y^*$.
\end{itemize}
\end{theorem}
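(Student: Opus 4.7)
I would prove the four identities using only properties (i)--(iv) from the excerpt, so that distributivity (which a pseudocomplemented lattice need not possess) is never needed. My order is: first the De~Morgan-type identity $(x\vee y)^*\approx x^*\wedge y^*$ and the absorption identity $x\wedge(x^*\wedge y)^*\approx x$, then the four-way chain for $(x\wedge y)^*$, and finally $(x\wedge y)^{**}\approx x^{**}\wedge y^{**}$ as a quick corollary. The only genuine obstacle is the reverse inequality $(x\wedge y)^*\le(x^{**}\wedge y^{**})^*$ in the third identity; the other inequalities all come from antitonicity of $^*$.

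For $(x\vee y)^*\approx x^*\wedge y^*$, antitonicity of $^*$ handles $\le$, while the converse follows from (iv): since $x\wedge(x^*\wedge y^*)\le x\wedge x^*=0$ and dually for $y$, both $x$ and $y$ lie below $(x^*\wedge y^*)^*$, hence so does $x\vee y$. For $x\wedge(x^*\wedge y)^*\approx x$, note $x^*\wedge y\le x^*$ forces $x\le x^{**}\le(x^*\wedge y)^*$ by (i)--(ii), and the meet with $x$ absorbs. In both cases the moves are routine.

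For $(x\wedge y)^*\approx(x^{**}\wedge y)^*\approx(x\wedge y^{**})^*\approx(x^{**}\wedge y^{**})^*$, antitonicity applied to $x\le x^{**}$ and $y\le y^{**}$ yields the easy direction of each equality; it remains to prove $(x\wedge y)^*\le(x^{**}\wedge y^{**})^*$. The key trick is a two-step zigzag via (iv). Setting $c:=(x\wedge y)^*$, the identity $c\wedge x\wedge y=0$ rewrites (by (iv)) as $c\wedge x\le y^*$, whence $c\wedge x\wedge y^{**}\le y^*\wedge y^{**}=0$; reinterpreting this as $(c\wedge y^{**})\wedge x=0$ gives $c\wedge y^{**}\le x^*$, so $c\wedge y^{**}\wedge x^{**}\le x^*\wedge x^{**}=0$, which by (iv) is precisely $c\le(x^{**}\wedge y^{**})^*$. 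This is the only step needing real thought: successive applications of the adjunction-style property (iv) replace the unavailable distributive law.

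Finally, applying $^*$ to the already-proved equation $(x\wedge y)^*\approx(x^{**}\wedge y^{**})^*$ gives $(x\wedge y)^{**}\approx(x^{**}\wedge y^{**})^{**}$. Since (iii) implies $(a^{**})^{**}=a^{**}$ for every $a$, monotonicity of $^{**}$ applied to $x^{**}\wedge y^{**}\le x^{**},y^{**}$ yields $(x^{**}\wedge y^{**})^{**}\le x^{**}\wedge y^{**}$, and combined with (ii) we conclude $(x^{**}\wedge y^{**})^{**}=x^{**}\wedge y^{**}$, finishing the proof of $(x\wedge y)^{**}\approx x^{**}\wedge y^{**}$.
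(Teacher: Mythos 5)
Your proof is correct. Note that the paper itself gives no proof of Theorem~\ref{th1}: it states these identities as well-known facts about pseudocomplemented lattices and refers to the literature (Frink, Gr\"atzer). Your argument is a sound self-contained derivation from properties (i)--(iv) alone, and the one nontrivial step --- the zigzag $c\wedge x\wedge y=0\Rightarrow c\wedge x\le y^*\Rightarrow c\wedge y^{**}\le x^*\Rightarrow c\wedge x^{**}\wedge y^{**}=0$, which substitutes repeated use of the adjunction $a\wedge b=0\Leftrightarrow a\le b^*$ for the unavailable distributive law --- is exactly the classical Glivenko--Frink argument. The remaining pieces (antitonicity for the easy inequalities, $a^{****}=a^{**}$ from (iii) to close the $(x\wedge y)^{**}$ identity) are all handled correctly, so nothing is missing.
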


The next result is well-known, see e.g.\ \cite{Gr}.

\begin{theorem}\label{th2}
If $\mathbf L=(L,\vee,\wedge,{}^*,0,1)$ is a distributive pseudocomplemented lattice then the following are equivalent:
\begin{enumerate}[{\rm(i)}]
\item $\mathbf L$ is a Stone lattice,
\item $\mathbf L$ satisfies the identity $(x^*\wedge y^*)^*\approx x^{**}\vee y^{**}$,
\item $\mathbf L$ satisfies the identity $(x\vee y)^{**}\approx x^{**}\vee y^{**}$.
\end{enumerate}
\end{theorem}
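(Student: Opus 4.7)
The plan is to prove the cyclic chain (i) $\Rightarrow$ (ii) $\Rightarrow$ (iii) $\Rightarrow$ (i), exploiting at each step the four identities already collected in Theorem~\ref{th1}. Throughout I will use the basic equivalence $a\wedge b=0\Leftrightarrow a\le b^*$ and antitonicity of $^*$, both of which hold in any pseudocomplemented lattice.

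For (i) $\Rightarrow$ (ii), one direction is immediate: $x^*\wedge y^*\le x^*,y^*$ gives by antitonicity $x^{**},y^{**}\le(x^*\wedge y^*)^*$, hence $x^{**}\vee y^{**}\le(x^*\wedge y^*)^*$. The substantial step is the reverse inequality. Here I intend to use the Stone identity to write $1=x^*\vee x^{**}$, multiply, and apply distributivity:
\[
(x^*\wedge y^*)^*=\bigl((x^*\wedge y^*)^*\wedge x^*\bigr)\vee\bigl((x^*\wedge y^*)^*\wedge x^{**}\bigr).
\]
The second summand is trivially $\le x^{**}$. For the first, $(x^*\wedge y^*)^*\wedge(x^*\wedge y^*)=0$ implies $\bigl((x^*\wedge y^*)^*\wedge x^*\bigr)\wedge y^*=0$, hence by the pseudocomplement equivalence $(x^*\wedge y^*)^*\wedge x^*\le y^{**}$. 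Combining, $(x^*\wedge y^*)^*\le x^{**}\vee y^{**}$.

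For (ii) $\Rightarrow$ (iii), I simply apply $^*$ to the identity $(x\vee y)^*\approx x^*\wedge y^*$ of Theorem~\ref{th1} and invoke (ii):
\[
(x\vee y)^{**}\approx(x^*\wedge y^*)^*\approx x^{**}\vee y^{**}.
\]
For (iii) $\Rightarrow$ (i), I substitute $y:=x^*$. The right-hand side becomes $x^{**}\vee x^{***}=x^{**}\vee x^*$. For the left-hand side, $x\wedge x^*=0$ yields $(x\vee x^*)^*=x^*\wedge x^{**}=0$ (again using Theorem~\ref{th1}), so $(x\vee x^*)^{**}=0^*=1$. Therefore $x^*\vee x^{**}\approx 1$, which is the Stone identity.

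The main obstacle is the first implication; the splitting trick via $1=x^*\vee x^{**}$ is where distributivity is genuinely used. The other two implications are essentially one-line manipulations of identities already available from Theorem~\ref{th1}.
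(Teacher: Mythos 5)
Your proof is correct: each of the three implications checks out, and the splitting of $(x^*\wedge y^*)^*$ along $1=x^*\vee x^{**}$ via distributivity, together with the observation that $(x^*\wedge y^*)^*\wedge x^*\le y^{**}$, is exactly the standard argument for the only nontrivial step. Note that the paper itself gives no proof of this theorem --- it is quoted as a well-known result with a reference to Gr\"atzer's monograph --- so there is no in-paper argument to compare against; your cyclic chain (i) $\Rightarrow$ (ii) $\Rightarrow$ (iii) $\Rightarrow$ (i), using the identities of Theorem~\ref{th1}, is the classical route.
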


Our assumption in Definition~\ref{def1}, namely $\big(\Max L(x^*,y^*)\big)^*\le_2\Min U(x^{**},y^{**})$ for all $x,y\in P$, is quite natural since if the poset in question is a lattice, then it gets $(x^*\wedge y^*)^*\le x^{**}\vee y^{**}$ for all $x,y\in P$ which is automatically satisfied in every Stone lattice as follows from (ii) of Theorem~\ref{th2}.

\begin{remark}
Assertion {\rm(iii)} of Theorem~\ref{th2} implies that $\mathbf L$ satisfies the identities
\begin{align*}
(x^*\vee y^*)^{**} & \approx x^{***}\vee y^{***}\approx x^*\vee y^*, \\
       (x\vee y)^* & \approx(x\vee y)^{***}\approx(x^{**}\vee y^{**})^*
\end{align*}
{\rm(}see e.g.\ {\rm\cite{Gr})}
\end{remark}

\section{Filters}

The concept of a filter of a poset was already introduced in \cite{CH}. Recall that a non-empty subset of a poset $\mathbf P=(P,\le)$ is called a {\em filter} of $\mathbf P$ if
\begin{itemize}
	\item $a\in F$, $b\in P$ and $a\le b$ together imply $b\in F$,
	\item $a,b\in F$ implies $L(a,b)\cap F\ne\emptyset$.
\end{itemize}
The {\em filter} $F$ is called {\em proper} if $F\neq P$. For every $x\in P$ the set $F_x:=\{y\in P\mid x\le y\}$ is a filter of $\mathbf P$, the so-called {\em principal filter} of $\mathbf P$ generated by $x$. In case $\mathbf P$ satisfies the DCC every filter of $\mathbf P$ is principal. This can be seen as follows. Assume $\mathbf P$ to satisfy the DCC and let $F$ be a filter of $\mathbf P$ and $a\in F$. Because of the DCC there exists some minimal element $b$ of $F$ with $b\le a$. Assume that $b$ is not the smallest element of $F$. Then there exists some $c\in F$ with $b\not\le c$. Since $F$ is a filter of $\mathbf P$ there exists some $d\in L(b,c)\cap F$. Now $d=b$ would imply $b\le c$, a contradiction. This shows $d<b$ contradicting the minimality of $b$. In particular, every filter of a finite pseudocomplemented poset as well as of a Stone poset is principal.

\begin{example}
	All filters of the poset from Fig.~1 are given by:
	\begin{align*}
		F_0 & =\{0,a,b,c,d,1\}, \\
		F_a & =\{a,c,d,1\}, \\
		F_b & =\{b,c,d,1\}, \\
		F_c & =\{c,1\}, \\
		F_d & =\{d,1\}, \\
		F_1 & =\{1\}.
	\end{align*}
\end{example}

Hence the poset of all filters of a poset $\mathbf P$ satisfying the DCC ordered by set inclusion is dually isomorphic to $\mathbf P$ and hence it is lattice-ordered if and only if $\mathbf P$ has this property.

Let us note that e.g.\ the subset $D=\{c,d,1\}$ of $P$ from Fig.~1 is not a filter of $\mathbf P$ since $c,d\in D$, but $L(c,d)\cap D=\emptyset$.

\begin{lemma}\label{lem2}
	Let $\mathbf P=(P,\le,{}^*,0,1)$ be a pseudocomplemented poset, $F$ a proper filter of $\mathbf P$ and $a\in P$. Then the following holds:
	\begin{enumerate}[{\rm(i)}]
		\item $\{a,a^*\}\not\subseteq F$,
		\item if $a^*\in F$ and $b\in\overline a$ then $\overline b\not\subseteq F$.
	\end{enumerate}
\end{lemma}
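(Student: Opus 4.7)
The plan is to prove (i) directly from the definition of the pseudocomplement together with the two filter axioms, and then to reduce (ii) to (i) via the defining property of the class $\overline{a}$ (which I read, as is standard in this circle of ideas, as the Glivenko-type class $\overline{a}=\{y\in P\mid y^{*}=a^{*}\}$).

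For (i), the key observation is that $L(a,a^{*})=L(0)=\{0\}$, since by definition of the pseudocomplement $L(a,a^{*})=0$. Now suppose, for contradiction, that $\{a,a^{*}\}\subseteq F$. Applying the second filter axiom to the pair $a,a^{*}\in F$ yields some element $c\in L(a,a^{*})\cap F$, hence $c=0$ and $0\in F$. But $F$ is an up-set, so from $0\in F$ and $0\le b$ for every $b\in P$ we would conclude $F=P$, contradicting the properness of $F$.

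For (ii), the assumption $b\in\overline{a}$ unpacks to $b^{*}=a^{*}$, so $b^{*}\in F$. Trivially $b\in\overline{b}$, since $b^{*}=b^{*}$. If one had $\overline{b}\subseteq F$, then in particular $b\in F$, and together with $b^{*}\in F$ this would give $\{b,b^{*}\}\subseteq F$, contradicting (i) applied to the element $b$. Therefore $\overline{b}\not\subseteq F$, as required.

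The only real obstacle is the lemma's reliance on the notation $\overline{a}$, which is not recalled in the excerpt itself; once one fixes the Glivenko-class reading, part (ii) reduces in one line to part (i), and part (i) is essentially the observation that a proper filter cannot contain $0$. I expect no calculational difficulty beyond making the contradiction in (i) explicit via the up-set property of $F$.
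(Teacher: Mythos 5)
Your proof of (i) is correct and is exactly the paper's argument: $L(a,a^*)=0$, so $\{a,a^*\}\subseteq F$ together with the second filter axiom would force $0\in F$, and then the up-set property gives $F=P$, contradicting properness.

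Part (ii), however, rests on a misreading of the notation $\overline{a}$. In this paper $\overline{A}$ is defined (admittedly only further down in the text, after the lemma is stated) by $\overline{A}:=\{x\in P\mid U(x^{**},y^{**})=1\text{ for all }y\in A\}$, so $\overline{a}=\{x\in P\mid U(a^{**},x^{**})=1\}$; it is not the Glivenko class $\{y\in P\mid y^*=a^*\}$. With the correct definition both of your key steps fail: $b\in\overline{a}$ does not give $b^*=a^*$, and $b\in\overline{b}$ is false in general --- it holds precisely when $U(b^{**},b^{**})=U(b^{**})=1$, i.e.\ when $b$ is dense. So your argument establishes a different statement, not the one in the lemma. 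The intended argument is short but uses a different mechanism: since $a^*\in F$ and $F$ is proper, part (i) gives $a\notin F$; the relation $U(x^{**},y^{**})=1$ is symmetric in $x$ and $y$, so $b\in\overline{a}$ implies $a\in\overline{b}$; hence $a\in\overline{b}\setminus F$, which shows $\overline{b}\not\subseteq F$. Note that the hypothesis $a^*\in F$ is used only to exclude $a$ from $F$, not to place anything related to $b^*$ inside $F$.
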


\begin{proof}
\
\begin{enumerate}[(i)]
	\item The inclusion $\{a,a^*\}\subseteq F$ would imply $\{0\}\cap F=L(a,a^*)\cap F\ne\emptyset$ and hence $0\in F$, i.e.\ $F=P$ contradicting the fact that $F$ is proper.
	\item Assume $a^*\in F$ and $b\in\overline a$. Since $F$ is proper, we conclude $a\not\in F$ by (i). Now $b\in\overline a$ implies $a\in\overline b$. Together we have $a\in\overline b\setminus F$ showing $\overline b\not\subseteq F$.
\end{enumerate}
\end{proof}

Next we generalize two concepts introduced by M.~Sambasiva~Rao in \cite{SR} to pseudocomplemented posets.

Let $(P,\le,{}^*,0,1)$ be a pseudocomplemented poset and $A,B\subseteq P$. We define
\begin{align*}
	\overline A & :=\{x\in P\mid U(x^{**},y^{**})=1\text{ for all }y\in A\}, \\
	        A^D & :=\{x\in P\mid U(x,y)\subseteq D\text{ for all }y\in A\}.
\end{align*}

\begin{lemma}
	Let $(P,\le,{}^*,0,1)$ be a pseudocomplemented poset and $A\subseteq P$. Then $\overline{\emptyset}=\emptyset^D=P$ and $\overline P=P^D=D$.
\end{lemma}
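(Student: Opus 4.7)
The plan is to prove each of the four equalities separately, with the $\emptyset$ cases being vacuous and the $P$ cases reducing to a single well-chosen witness $y=x$ together with the basic properties (i)--(iv) of pseudocomplementation recalled in the excerpt.

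First I would dispose of $\overline{\emptyset}=\emptyset^D=P$. In both defining formulas the quantifier ``for all $y\in A$'' ranges over the empty set, so the condition is vacuously true for every $x\in P$. No pseudocomplement properties are needed here.

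Next I would prove $\overline{P}=D$. For the inclusion $D\subseteq\overline{P}$, observe that if $x\in D$ then $x^*=0$, so $x^{**}=0^*=1$. Hence for any $y\in P$, $U(x^{**},y^{**})=U(1,y^{**})=U(1)=1$, showing $x\in\overline{P}$. For the reverse inclusion, I would specialise the defining condition at $y=x$: then $U(x^{**},x^{**})=U(x^{**})=1$. Since $x^{**}\in U(x^{**})$, this forces $x^{**}=1$, hence $x^*=x^{***}=1^*=0$ (using property (iii)), so $x\in D$.

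Finally I would prove $P^D=D$. For $D\subseteq P^D$, if $x\in D$ and $y\in P$, then for any $z\in U(x,y)$ we have $x\le z$, so by antitonicity (property (i)) $z^*\le x^*=0$, i.e.\ $z^*=0$ and $z\in D$; thus $U(x,y)\subseteq D$. For $P^D\subseteq D$, I again use the trick $y=x$: from $x\in P^D$ we get $U(x,x)=U(x)\subseteq D$, and since $x\in U(x)$ this gives $x\in D$ directly.

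There is no real obstacle here; the argument is essentially a bookkeeping exercise. The only subtlety worth flagging in the write-up is the slight sleight of hand in $U(x^{**},x^{**})=U(x^{**})$ and the identification of the singleton $\{1\}$ with $1$ used throughout the paper, so the reading ``$U(x^{**})=1$ implies $x^{**}=1$'' is legitimate.
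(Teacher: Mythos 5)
Your proof is correct and follows essentially the same route as the paper: the $\overline{\emptyset}=\emptyset^D=P$ cases are dismissed as vacuous, and the $P$ cases are handled by reducing the universally quantified condition to a single instance and unwinding the definitions of $^*$ and $D$. The only (immaterial) difference is your choice of witness $y=x$ where the paper specialises to $y=0$ and uses $U(a,0)=U(a)$.
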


\begin{proof}
	Let $a\in P$. Then the following are equivalent: $a\in\overline P$; $U(a^{**},x^{**})=1$ for all $x\in P$; $U(a^{**},0^{**})=1$; $U(a^{**},0)=1$; $U(a^{**})=1$; $a^{**}=1$; $a^*=0$; $a\in D$. This proves $\overline P=D$. Moreover, the following are equivalent: $a\in P^D$; $U(a,x)\subseteq D$ for all $x\in P$; $U(a,0)\subseteq D$; $U(a)\subseteq D$; $a\in D$. This shows $P^D=D$. The rest of the proof is trivial.
\end{proof}

\begin{lemma}\label{lem3}
	Let $\mathbf P=(P,\le,{}^*,0,1)$ be a pseudocomplemented poset, $a\in P$ and $A\subseteq P$. Then the following holds:
	\begin{enumerate}[{\rm(i)}]
		\item $\overline A\subseteq A^D$,
		\item if $\mathbf P$ is a Stone poset then $\overline A=A^D$ and hence $a^*\in a^D$.
	\end{enumerate}
\end{lemma}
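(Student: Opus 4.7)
For part (i), I would work directly with the definitions. Let $x\in\overline A$ and $y\in A$; to show $x\in A^D$, it is enough to check $U(x,y)\subseteq D$. If $z\in U(x,y)$ then $z\ge x$ and $z\ge y$, and applying property~(i) of the introduction twice shows that $^{**}$ is order-preserving, so $z^{**}\ge x^{**}$ and $z^{**}\ge y^{**}$, i.e.\ $z^{**}\in U(x^{**},y^{**})$. The assumption $x\in\overline A$ forces $U(x^{**},y^{**})=1$, whence $z^{**}=1$. By property~(iii) we then have $z^*=z^{***}=1^*=0$, so $z\in D$, as required.

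For part (ii), the inclusion $\overline A\subseteq A^D$ is already (i); I only need $A^D\subseteq\overline A$, and here the Stone hypothesis enters. Fix $x\in A^D$ and $y\in A$; the goal is $U(x^{**},y^{**})=1$. Since $U(x,y)\subseteq D$ and the {\rm DCC} guarantees that $\Min U(x,y)$ is non-empty, every $v\in\Min U(x,y)$ is dense, so $v^{**}=1$; hence $\big(\Min U(x,y)\big)^{**}=\{1\}$. Now I invoke the characterization of Stone posets given by (iii) of Theorem~\ref{th5}, namely $\big(\Min U(x,y)\big)^{**}\approx_2\Min U(x^{**},y^{**})$, obtaining $\{1\}=_2\Min U(x^{**},y^{**})$. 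By Remark~\ref{rem1} this upgrades to $\Min U(x^{**},y^{**})=\{1\}$, and a second use of the {\rm DCC} lifts the conclusion to $U(x^{**},y^{**})=\{1\}$: any $z$ in that set dominates some minimal element, which must be $1$, so $z=1$. Therefore $x\in\overline A$.

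Finally, the asserted $a^*\in a^D$ is an immediate consequence. Specialising to $A=\{a\}$ gives $\overline a=a^D$, so it suffices to exhibit $a^*\in\overline a$, i.e.\ to check $U\big((a^*)^{**},a^{**}\big)=1$. But $(a^*)^{**}=a^{***}=a^*$, so this reduces to $U(a^*,a^{**})=1$, which is precisely the Stone identity of Lemma~\ref{lem4}. The only real obstacle in the whole argument is part (ii), where the definitions of $A^D$ and $\overline A$ live at different granularities: $A^D$ constrains every upper bound of $\{x,y\}$, while $\overline A$ only constrains upper bounds of $\{x^{**},y^{**}\}$. The identity in Theorem~\ref{th5}(iii) is the exact bridge, and the {\rm DCC} is used both to guarantee that minimal upper bounds exist and to propagate the information from $\Min U(\cdot)$ back to all of $U(\cdot)$.
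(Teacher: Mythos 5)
Your proof is correct and follows essentially the same route as the paper's: the same direct computation with $z^{**}\in U(x^{**},y^{**})$ for part (i), and the same use of Theorem~\ref{th5}(iii) together with Remark~\ref{rem1}, the DCC, and Lemma~\ref{lem4} for part (ii). The only cosmetic difference is that the paper first reduces to singletons via $\overline A=\bigcap_{x\in A}\overline x$ and $A^D=\bigcap_{x\in A}x^D$, whereas you argue with a general $A$ directly.
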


\begin{proof}
Observe that $\overline A=\bigcap\limits_{x\in A}\overline x$ and $A^D=\bigcap\limits_{x\in A}x^D$.
\begin{enumerate}[(i)]
	\item Let $b\in\overline a$. Then $U(a^{**},b^{**})=1$. Let $c\in U(a,b)$. Then $c^{**}\in U(a^{**},b^{**})=1$ and hence $c^*=0$, i.e.\ $c\in D$. This shows $U(a,b)\subseteq D$ which means $b\in a^D$. We have proved $\overline a\subseteq a^D$.
	\item Assume $\mathbf P$ to be a Stone poset. According to (i), $\overline a\subseteq a^D$. Now let $b\in a^D$. Then $U(a,b)\subseteq D$ and hence $\Min U(a,b)\subseteq D$ whence $\big(\Min U(a,b)\big)^*=0$, i.e.\ $\big(\Min U(a,b)\big)^{**} =1$. Due to Theorem~\ref{th5} (iii) and Remark~\ref{rem1} the last statement is equivalent to $\Min U(a^{**},b^{**})=1$ which means the same as $U(a^{**},b^{**})=1$, i.e.\ $b\in\overline a$. This shows $a^D\subseteq\overline a$ and hence $\overline a=a^D$. According to Lemma~\ref{lem4} we have $U(a^{***},a^{**})=U(a^*,a^{**})=1$ and hence $a^*\in\overline a\subseteq a^D$ by (i).
\end{enumerate}
\end{proof}

\begin{remark}
	The pair $(A\mapsto\overline A,A\mapsto\overline A)$ of mappings is the Galois-correspondence between $(2^P,\subseteq)$ and $(2^P,\subseteq)$ induced by the binary relation $\{(x,y)\in P^2\mid U(x^{**},y^{**})=1\}$ on $P$. Hence $A\mapsto\overline{\overline A}$ is a closure operator on $(2^P,\subseteq)$ and therefore the set $\{\overline A\mid A\subseteq P\}$ of all subsets of $P$ closed under this closure operator forms a closure system of $P$, i.e.\ it is closed under arbitrary intersections. This means that the closed subsets of $P$ form a complete lattice with respect to set inclusion with smallest element $\overline P=D$ and greatest element $\overline\emptyset=P$ and we have the following properties for $A,B\subseteq P$:
	\begin{align*}
		A & \subseteq B\text{ implies }\overline B\subseteq\overline A, \\
		A & \subseteq\overline{\overline A}, \\
		\overline{\overline{\overline A}} & =\overline A, \\
		A & \subseteq\overline B\text{ if and only if }B\subseteq\overline A.
	\end{align*}
	Analogously, the pair $(A\mapsto A^D,A\mapsto A^D)$ is the Galois-correspondence between $(2^P,\subseteq)$ and $(2^P,\subseteq)$ induced by the binary relation $\{(x,y)\in P^2\mid U(x,y)\subseteq D\}$ on $P$. Hence the set $\{A^D\mid A\subseteq P\}$ of all subsets of $P$ closed under the closure operator $A\mapsto A^{DD}$ forms a complete lattice with respect to set inclusion and we have analogous results as before.
\end{remark}

Note that $\overline A$ as well as $A^D$ are order filters in $(P,\le)$ and that for $a\in P$ we have $\overline{F_a}=\overline a$ as well as $(F_a)^D=a^D$.

The following result shows how the property of being a Stone poset influences the properties of filters and, conversely, which properties of filters imply the Stone identity.

\begin{theorem}
	Let $\mathbf P=(P,\le,{}^*,0,1)$ be a pseudocomplemented poset satisfying both the {\rm ACC} and the {\rm DCC}. Consider the following statements:
	\begin{enumerate}[{\rm(i)}]
		\item $\mathbf P$ is a Stone poset,
		\item $\overline F=F^D$ for every filter $F$ of $\mathbf P$,
		\item for each filters $F,G$ of $\mathbf P$ we have $F\cap G\subseteq D$ if and only if $F\subseteq\overline G$,
		\item $\overline{\overline x}=\overline{x^*}$ for all $x\in P$,
		\item $\overline{\overline x}\subseteq\overline{x^*}$ for all $x\in P$,
		\item $\mathbf P$ satisfies the Stone identity $U(x^*,x^{**})\approx1$.
	\end{enumerate}
	Then {\rm(i)} $\Rightarrow$ {\rm(ii)} $\Leftrightarrow$ {\rm(iii)} $\Rightarrow$ {\rm(iv)} $\Rightarrow$ {\rm(v)} $\Leftrightarrow$ {\rm(vi)}.
\end{theorem}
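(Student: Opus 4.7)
The plan is to prove the chain one implication at a time, handling the routine pieces first. The implication (i)~$\Rightarrow$~(ii) follows immediately from Lemma~\ref{lem3}(ii), which gives $\overline A = A^D$ for every $A \subseteq P$ in a Stone poset and in particular for every filter. For (iv)~$\Rightarrow$~(v) I would just take one direction of the asserted equality. For (v)~$\Leftrightarrow$~(vi) the key observation is that $x \in \overline{\overline x}$ always holds (Galois closure), so (v) forces $x \in \overline{x^*}$, which unfolds to $U(x^{**},x^*)=1$; conversely, (vi) puts $x^* \in \overline x$ and antitonicity of $A \mapsto \overline A$ gives $\overline{\overline x} \subseteq \overline{\{x^*\}} = \overline{x^*}$.

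For (ii)~$\Leftrightarrow$~(iii) I would argue directly. Assuming (ii): if $F \cap G \subseteq D$, then for $x \in F$, $y \in G$, every $u \in U(x,y)$ lies in $F \cap G \subseteq D$ (both are order filters), so $F \subseteq G^D = \overline G$; conversely, $F \subseteq \overline G = G^D$ together with $z \in F \cap G$ forces $z \in U(z,z) \subseteq D$. For (iii)~$\Rightarrow$~(ii), Lemma~\ref{lem3}(i) gives $\overline F \subseteq F^D$; for the reverse inclusion, given $x \in F^D$ the principal filter $F_x$ satisfies $F_x \cap F \subseteq D$ (any $z \in F_x \cap F$ is an upper bound of $x$ and itself, hence in $U(x,z) \subseteq D$), so (iii) yields $F_x \subseteq \overline F$ and $x \in \overline F$.

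The substantive work is (iii)~$\Rightarrow$~(iv). First I would extract the Stone identity from (iii). In any pseudocomplemented poset $U(x,x^*) \subseteq D$: a common upper bound $u$ gives $u^* \in L(x^*,x^{**})$, and $L(x^*,x^{**}) = \{0\}$ by the equivalence $v \le x^{**} \Leftrightarrow L(v,x^*) = 0$. Applying (iii) to $F_{x^*}$ and $F_x$ therefore yields $F_{x^*} \subseteq \overline{F_x} = \overline x$, so $x^* \in \overline x$, i.e.\ $U(x^*,x^{**})=1$. The inclusion $\overline{\overline x} \subseteq \overline{x^*}$ is then immediate from $\{x^*\} \subseteq \overline x$ and antitonicity of $A \mapsto \overline A$.

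The reverse inclusion $\overline{x^*} \subseteq \overline{\overline x}$ will be the main obstacle. The plan is to take $y \in \overline{x^*}$ and $z \in \overline x$, rewrite them via (ii) as $U(y,x^*) \subseteq D$ and $U(z,x) \subseteq D$, and show $U(y^{**},z^{**})=1$ by picking an arbitrary $w \in U(y^{**},z^{**})$. Since $w \ge y,z$, one immediately has $U(w,x^*) \subseteq U(y,x^*) \subseteq D$ and $U(w,x) \subseteq U(z,x) \subseteq D$, which (iii) upgrades to $U(w^{**},x^*) = U(w^{**},x^{**}) = 1$. Then Theorem~\ref{th3}(ix), combined with $\Min U = \{1\}$ and hence $(\Min U)^* = \{0\}$, forces $\Max L(w^*,x^{**}) = \Max L(w^*,x^*) = \{0\}$, so $L(w^*,x^{**}) = L(w^*,x^*) = \{0\}$. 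The characterization $a \le b^*$ iff $L(a,b)=0$ turns this into $w^* \le x^*$ and $w^* \le x^{**}$, placing $w^* \in L(x^*,x^{**}) = \{0\}$ and making $w$ dense. Hence $U(y^{**},z^{**}) \subseteq D$, and a final application of (iii) upgrades this inclusion to the desired $U(y^{**},z^{**}) = 1$, giving $y \in \overline{\overline x}$. The subtlety to watch is that (iii) must be invoked \emph{twice}---the direct attempt to conclude $w = 1$ stalls at the density stage---and that Theorem~\ref{th3}(ix) is the essential bridge translating each upper-bound triviality into a lower-bound condition that traps $w^*$ inside the already trivial set $L(x^*,x^{**})$.
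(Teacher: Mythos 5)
Your proposal is correct, and for most of the chain it coincides with the paper's own proof: (i)\,$\Rightarrow$\,(ii) via Lemma~\ref{lem3}, the reduction of (ii)\,$\Leftrightarrow$\,(iii) to principal filters and the element-wise statements $a\in b^D$ versus $a\in\overline b$, and (v)\,$\Leftrightarrow$\,(vi) through $x\in\overline{\overline x}$ together with the equivalence of $x\in\overline{x^*}$ and $U(x^*,x^{**})=1$. Likewise your derivation of the Stone identity from (iii) (hence of the easy inclusion $\overline{\overline x}\subseteq\overline{x^*}$) is exactly the paper's first half of that step. Where you genuinely diverge is the hard inclusion $\overline{x^*}\subseteq\overline{\overline x}$. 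The paper takes $b\in\overline{a^*}$ and $c\in\overline a$, converts both memberships via Lemma~\ref{lem1}(i) into $c^*\le a^{**}$ and $L(a^{**},b^*)=0$, deduces $L(b^*,c^*)=0$, and then applies Lemma~\ref{lem1}(iii) to get $\big(U(b,c)\big)^*\subseteq L(b^*,c^*)=0$, i.e.\ $U(b,c)\subseteq D$, so a single application of (ii) finishes. You instead fix an arbitrary $w\in U(y^{**},z^{**})$, push the two $D$-conditions down to $w$, and trap $w^*$ in $L(x^*,x^{**})=0$. This is sound, but it is heavier than necessary in two places: Theorem~\ref{th3}(ix) is not needed, since Lemma~\ref{lem1}(i) applied to $U(w^{**},x^*)=1$ and $U(w^{**},x^{**})=1$ already gives $L(w^*,x^{**})=0$ and $L(w^*,x^*)=0$; and your second ``upgrade'' application of (iii) at the end (needed because you analyse $U(y^{**},z^{**})$ rather than $U(y,z)$, exploiting $y^{****}=y^{**}$) is avoided in the paper by working with $U(b,c)$ directly, since $U(b,c)\subseteq D$ is precisely $b\in c^D=\overline c$. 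In short: same skeleton and the same underlying mechanism (everything is trapped by $L(x^*,x^{**})=0$), with a correct but slightly more roundabout treatment of the one substantive inclusion.
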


\begin{proof}
	Let $a,b,c\in P$. Since $\mathbf P$ satisfies the DCC, every filter of $\mathbf P$ is principal. Moreover, $\overline{F_a}=\overline a$ and $(F_a)^D=a^D$. \\
	(i) $\Rightarrow$ (ii): \\
	This follows from Lemma~\ref{lem3}. \\
	(ii) $\Leftrightarrow$ (iii) \\
	This follows from the fact that the statements
	\[
	F_a\cap F_b\subseteq D; U(a,b)\subseteq D; a\in b^D
	\]
	are equivalent and that the statements
	\[
	F_a\subseteq\overline{F_b}; F_a\subseteq\overline b; a\in\overline b
	\]
	are equivalent. \\
	(ii) $\Rightarrow$ (iv): \\
	Since $a,a^*\le U(a,a^*)$ we have $\big(U(a,a^*)\big)^*\le a^*,a^{**}$ and hence $\big(U(a,a^*)\big)^*=0$ which implies $U(a,a^*)\subseteq D$, i.e.\ $a^*\in a^D=\overline a$ by (iii) from which we conclude $\overline{\overline a}\subseteq\overline{a^*}$. Conversely, assume $b\in\overline{a^*}$ and $c\in\overline a$. Then $U(a^{**},c^{**})=1$ and hence $L(a^*,c^*)=0$ according to Lemma~\ref{lem1}. This shows $c^*\le a^{**}$. Since $b\in\overline{a^*}$ we have $U(a^{***},b^{**})=1$ and hence $L(a^{**},b^*)=0$ again by Lemma~\ref{lem1}. Because of $c^*\le a^{**}$ we obtain $L(b^*,c^*)=0$. According to Lemma~\ref{lem1} we conclude $\big(U(b,c)\big)^*\subseteq L(b^*,c^*)=0$ and hence $\big(U(b,c)\big)^*=0$, i.e.\ $U(b,c)\subseteq D$ which means $b\in c^D=\overline c$ by (iii). Thus we have proved $b\in\overline x$ for all $x\in\overline a$ showing $b\in\overline{\overline a}$. Therefore $\overline{a^*}\subseteq\overline{\overline a}$. Together we obtain $\overline{\overline a}=\overline{a^*}$. \\
	(iv) $\Rightarrow$ (v): \\
	This is clear. \\
	(v) $\Leftrightarrow$ (vi): \\
	The following are equivalent: $U(a^*,a^{**})=1$; $a\in\overline{a^*}$; $\overline{\overline a}\subseteq\overline{a^*}$.
\end{proof}

Now we generalize a further concept (concerning filters) from \cite{SR} to pseudocomplemented posets as follows.

\begin{definition}
	We call a {\em filter} $F$ of a pseudocomplemented poset $(P,\le,{}^*,0,1)$ {\em coherent} if $x\in F$, $y\in P$ and $\overline x=\overline y$ together imply $y\in F$.
\end{definition}

\begin{lemma}
	Let $\mathbf P=(P,\le,{}^*,0,1)$ be a Stone poset and $F$ a filter of $\mathbf P$ and assume that for every $x\in P$, $x^{**}\in F$ implies $x\in F$. Then $F$ is coherent.
\end{lemma}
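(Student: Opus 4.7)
The plan is to reduce the hypothesis $\overline{x}=\overline{y}$ to the much stronger equality $x^{**}=y^{**}$. Once this reduction is in hand, coherence of $F$ follows almost immediately from upward closure of $F$ together with the standing hypothesis that $z^{**}\in F$ forces $z\in F$.

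First, I would prove that in every Stone poset $\overline{x}=\overline{y}$ already implies $x^{**}=y^{**}$. The key observation is that the Stone identity of Lemma~\ref{lem4} gives $U(x^*,x^{**})=1$, and since $(x^*)^{**}=x^{***}=x^*$ by property~(iii) of Section~2, this is exactly the statement that $x^*\in\overline{x}$. From $\overline{x}=\overline{y}$ we therefore read off $x^*\in\overline{y}$, i.e.\ $U(x^*,y^{**})=1$. Rewriting this in the form $U(a^*,b^*)=1$ with $a=x$ and $b=y^*$, Lemma~\ref{lem1}(i) yields $L(x,y^*)=0$, which by property~(iv) is equivalent to $y^*\le x^*$. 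Interchanging the roles of $x$ and $y$ (using that $y^*\in\overline{y}=\overline{x}$ by the same Stone-identity argument) gives $x^*\le y^*$, so $x^*=y^*$ and hence $x^{**}=y^{**}$.

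Second, I would assemble coherence. Given $x\in F$, $y\in P$ with $\overline{x}=\overline{y}$, upward closure of the filter $F$ together with $x\le x^{**}$ forces $x^{**}\in F$. The first step gives $y^{**}=x^{**}$, so $y^{**}\in F$, and the standing assumption on $F$ applied to $z=y$ delivers $y\in F$. This is exactly the coherence condition.

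The only delicate point is the first step: the whole reduction rests on recognising that the Stone identity places $x^*$ inside $\overline{x}$ (via the idempotency $x^{***}=x^*$). After that identification, the chain Lemma~\ref{lem1}(i)$\,\to\,$property~(iv) is symmetric in $x$ and $y$ and produces $x^*=y^*$ without further work; the second step is then only bookkeeping, so I do not expect any substantive obstacle here.
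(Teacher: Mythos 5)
Your proof is correct, and it reorganizes the argument in a way that genuinely differs from the paper's. The paper first uses the DCC (built into the definition of a Stone poset) to write $F=F_a$ as a principal filter, then uses the Stone identity of Lemma~\ref{lem4} to place $a^*$ in $\overline b=\overline c$, and converts $U(a^{***},c^{**})=1$ via Lemma~\ref{lem1}(i) into $a\le c^{**}$, i.e.\ $c^{**}\in F_a$, before applying the hypothesis. You instead isolate a stronger, reusable structural fact: in any Stone poset, $\overline x=\overline y$ already forces $x^*=y^*$ and hence $x^{**}=y^{**}$, obtained by the symmetric observation that the Stone identity together with $x^{***}=x^*$ puts $x^*$ into $\overline x$, after which Lemma~\ref{lem1}(i) and property~(iv) run in both directions. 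The computational core (Stone identity plus Lemma~\ref{lem1}(i) plus antitonicity) is shared by both proofs, but your decomposition buys two things: the intermediate claim $\overline x=\overline y\Rightarrow x^{**}=y^{**}$ is of independent interest, and your concluding step uses only upward closure of $F$ together with the standing hypothesis, so it avoids the reduction to principal filters entirely and would apply verbatim to any up-set satisfying the hypothesis, not just to filters.
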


\begin{proof}
	Let $a,b,c\in P$. Since $\mathbf P$ satisfies the DCC, $F$ is principal as pointed out at the beginning of Section~4. Assume $b\in F_a$ and $\overline b=\overline c$. Since $\mathbf P$ is a Stone poset we have $U(a^*,a^{**})=1$ and hence $U(a^{***},b^{**})=U(a^*,b^{**})=1$ which shows $a^*\in\overline b=\overline c$. Therefore $U(a^*,c^{**})=U(a^{***},c^{**})=1$ whence $L(a,c^*)=0$ according to Lemma~\ref{lem1} showing $a\le c^{**}$, i.e.\ $c^{**}\in F_a$ which implies $c\in F_a$ according to our assumption.
\end{proof}

For every pseudocomplemented poset $(P,\le,{}^*,0,1)$ we define an operator $\pi$ as follows:
\[
\pi(A):=\{x\in P\mid\text{for all }y\in P\text{ there exists some }(z,u)\in\overline x\times A\text{ with }y\in\Min U(z,u)\}
\]
for all $A\subseteq P$. We use this operator for introducing the next concept.

\begin{definition}
	Let $\mathbf P=(P,\le,{}^*,0,1)$ be a pseudocomplemented poset and $F$ a filter of $\mathbf P$. Then $F$ is called
\begin{itemize}
	\item {\em strongly coherent} if $\pi(F)=F$,
	\item {\em closed} if $\overline{\overline F}=F$.
\end{itemize}
\end{definition}

\begin{lemma}
	Let $\mathbf P=(P,\le,{}^*,0,1)$ be a pseudocomplemented poset and $F$ a filter of $\mathbf P$ and assume $F$ to be either strongly coherent or closed. Then $F$ is coherent.
\end{lemma}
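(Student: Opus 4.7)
The plan is to handle the two hypotheses separately, since they have rather different flavours, but in both cases I fix $x\in F$ and $y\in P$ with $\overline x=\overline y$ and aim to derive $y\in F$.

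For the case that $F$ is closed, i.e.\ $\overline{\overline F}=F$, the key observation I would exploit is that the binary relation $\{(u,v)\in P^2\mid U(u^{**},v^{**})=1\}$ underlying the operator $\overline{\,\cdot\,}$ is symmetric, so $u\in\overline v$ iff $v\in\overline u$ (as already noted in the Galois-correspondence remark). Starting from an arbitrary $z\in\overline F$, applying the membership condition to $x\in F$ gives $U(z^{**},x^{**})=1$, i.e.\ $z\in\overline x$. By $\overline x=\overline y$ we then have $z\in\overline y$, which rewrites as $U(y^{**},z^{**})=1$. Since $z\in\overline F$ was arbitrary, this says $y\in\overline{\overline F}=F$, as desired.

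For the case that $F$ is strongly coherent, i.e.\ $\pi(F)=F$, the argument is an almost tautological unfolding of the definition of $\pi$. From $x\in F=\pi(F)$ we know that for every $w\in P$ there exists a pair $(z,u)\in\overline x\times F$ with $w\in\Min U(z,u)$. Since $\overline x=\overline y$, the very same pair $(z,u)$ lies in $\overline y\times F$ and witnesses the defining property of $\pi(F)$ at $y$, whence $y\in\pi(F)=F$.

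I do not anticipate a serious obstacle: both implications collapse to a direct symmetry/substitution argument once one observes that the operators $\overline{\,\cdot\,}$ and $\pi$ depend on an element only through the set $\overline{(\cdot)}$ associated with it. The only subtle point worth writing carefully is the symmetry of the relation $U(u^{**},v^{**})=1$ (which is immediate from $U(u^{**},v^{**})=U(v^{**},u^{**})$) in the closed case, and the remark that in the strongly coherent case it is precisely the first coordinate of the pair in $\pi$'s definition, namely an element of $\overline x$, that gets replaced by an element of $\overline y$.
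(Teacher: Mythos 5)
Your proposal is correct and follows essentially the same route as the paper: the strongly coherent case is the same direct substitution in the definition of $\pi$, and your element-wise argument for the closed case is just an unfolding of the paper's one-line chain $y\in\overline{\overline y}=\overline{\overline x}\subseteq\overline{\overline F}=F$, which relies on the same Galois-correspondence facts you invoke.
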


\begin{proof}
	Let $a\in F$ and $b\in P$ and suppose $\overline a=\overline b$. First assume $F$ to be strongly coherent. Then $a\in\pi(F)$. This means that for every $c\in P$ there exists some $(d,e)\in\overline a\times F$ with $c\in\Min U(d,e)$. Since $\overline a=\overline b$ we have that for every $c\in P$ there exists some $(d,e)\in\overline b\times F$ with $c\in\Min U(d,e)$. This shows $b\in\pi(F)=F$. Hence $F$ is coherent. If $F$ is closed then $b\in\overline{\overline b}=\overline{\overline a}\subseteq\overline{\overline F}=F$ proving $F$ to be coherent.
\end{proof}

The next types of filters are defined as follows.

\begin{definition}
	Let $\mathbf P=(P,\le,{}^*,0,1)$ be a pseudocomplemented poset and $F$ a filter of $\mathbf P$. Then $F$ is called
	\begin{itemize}
		\item {\em prime} if it is proper and if $x,y\in P$ and $U(x,y)\subseteq F$ together imply $\{x,y\}\cap F\ne\emptyset$,
		\item a {\em $D$-filter} if $D\subseteq F$,
		\item {\em maximal} if it is proper and if for every proper filter $G$ of $\mathbf P$, $F\subseteq G$ implies $F=G$.
	\end{itemize}
\end{definition}

\begin{theorem}\label{th4}
	Let $\mathbf P=(P,\le,{}^*,0,1)$ be a pseudocomplemented poset and $F$ a prime filter of $\mathbf P$. Consider the following statements:
	\begin{enumerate}[{\rm(i)}]
		\item $F$ is a $D$-filter,
		\item for any $x\in P$, $x\in F$ if and only if $x^*\not\in F$,
		\item for any $x\in P$, $x\in F$ if and only if $x^{**}\in F$,
		\item $x\in F$, $y\in P$ and $x^*=y^*$ together imply $y\in F$,
		\item $\overline x\subseteq F$ for all $x\in P\setminus F$.
	\end{enumerate}
	Then {\rm(i)} $\Leftrightarrow$ {\rm(ii)} $\Leftrightarrow$ {\rm(iii)} $\Leftrightarrow$ {\rm(iv)} $\Rightarrow$ {\rm(v)}.
\end{theorem}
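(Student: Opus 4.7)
The plan is to close the cycle (i) $\Rightarrow$ (ii) $\Rightarrow$ (iii) $\Rightarrow$ (iv) $\Rightarrow$ (i) in order to obtain the four equivalences simultaneously, and then to deduce (iv) $\Rightarrow$ (v) as a short separate step using the prime filter property together with (iii).

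For (i) $\Rightarrow$ (ii), the forward direction is immediate from Lemma~\ref{lem2}(i), since $F$ is proper; for the reverse, Lemma~\ref{lem1}(iv) yields $U(x,x^*)\subseteq D\subseteq F$, so primality of $F$ forces $x\in F$ or $x^*\in F$, whence $x\in F$ when $x^*\notin F$. For (ii) $\Rightarrow$ (iii), I apply (ii) once to $x$ and once to $x^{**}$, and use the identity $x^{***}=x^*$ noted in the basic list of properties of pseudocomplemented posets; the two biconditionals splice together to give $x\in F$ iff $x^{**}\in F$. For (iii) $\Rightarrow$ (iv), from $x^*=y^*$ I take stars again to obtain $x^{**}=y^{**}$ and then apply (iii) twice to transport membership from $x$ through $x^{**}=y^{**}$ back down to $y$. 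For (iv) $\Rightarrow$ (i), any $x\in D$ satisfies $x^*=0=1^*$, and since $1\in F$ the hypothesis (iv) forces $x\in F$.

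For (iv) $\Rightarrow$ (v) I may now assume (iii) as well. Let $a\in P\setminus F$ and $b\in\overline a$; by definition of $\overline a$ we have $U(a^{**},b^{**})=1$, and since $1$ lies in every filter this singleton is contained in $F$. Primality of $F$ then yields $a^{**}\in F$ or $b^{**}\in F$, so by (iii) either $a\in F$ or $b\in F$, and $a\notin F$ forces $b\in F$. The only step requiring any care is this last one: the role of primality is hidden inside the trivial inclusion $\{1\}\subseteq F$, so one must recognise that the content of $b\in\overline a$, namely $U(a^{**},b^{**})=1$, is exactly the hypothesis of primality applied to $a^{**}$ and $b^{**}$. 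Everything else reduces to routine manipulation of $(-)^{*}$ together with Lemmas~\ref{lem1} and~\ref{lem2} and the defining properties of a prime filter.
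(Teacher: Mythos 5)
Your proof is correct. The cycle (i) $\Rightarrow$ (ii) $\Rightarrow$ (iii) $\Rightarrow$ (iv) $\Rightarrow$ (i) follows the paper's argument almost verbatim, with one harmless variation in (ii) $\Rightarrow$ (iii): you splice two instances of the biconditional (ii) using $x^{***}=x^*$, whereas the paper gets the forward direction from $x\le x^{**}$ and the converse from Lemma~\ref{lem2}; both are one-line arguments. The genuinely different step is the last implication. The paper proves (i) $\Rightarrow$ (v): from $b\in\overline a$ it invokes Lemma~\ref{lem3} to get $b\in a^D$, hence $U(a,b)\subseteq D\subseteq F$, and applies primality to the pair $(a,b)$. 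You instead work directly from the definition of $\overline a$: the hypothesis $U(a^{**},b^{**})=1\subseteq F$ lets you apply primality to $(a^{**},b^{**})$, and the already-established equivalence (iii) pulls membership back down from $a^{**},b^{**}$ to $a,b$. Your route bypasses Lemma~\ref{lem3} and the set $a^D$ entirely, at the cost of leaning on the equivalence of (iii) with (i)--(iv); the paper's route keeps the primality application at the level of $a$ and $b$ and makes the role of the $D$-filter hypothesis (i) explicit. Both are complete and correct.
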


\begin{proof}
	Let $a,b\in P$. \\
	(i) $\Rightarrow$ (ii): \\
	Because of Lemma~\ref{lem1} (iv) we have $U(a,a^*)\subseteq D\subseteq F$. Since $F$ is prime this implies $\{a,a^*\}\cap F\neq\emptyset$. On the other hand, $\{a,a^*\}\subseteq F$ is not possible because of Lemma~\ref{lem2}. \\
	(ii) $\Rightarrow$ (iii): \\
	If $a\in F$ then $a^{**}\in F$ because of $a\le a^{**}$. Conversely, if $a^{**}\in F$ then $a^*\notin F$ because of Lemma~\ref{lem2} which implies $a\in F$ according to (ii). \\
	(iii) $\Rightarrow$ (iv): \\
	If $a\in F$ and $a^*=b^*$ then $b^{**}=a^{**}\in F$ because of (iii) and hence $b\in F$ again according to (iii). \\
	(iv) $\Rightarrow$ (i): \\
	If $a\in D$ then $1\in F$ and $1^*=0=a^*$ from which we conclude $a\in F$ by (iv). \\
	(i) $\Rightarrow$ (v): \\
	Assume $a\in P\setminus F$ and $b\in\overline a$. Then $b\in a^D$ according to Lemma~\ref{lem3} whence $U(a,b)\subseteq D\subseteq F$. Since $a\not\in F$ and $F$ is prime, we conclude $b\in F$.
\end{proof}

We can use Theorem~\ref{th4} and Lemma~\ref{lem2} to prove the following result.

\begin{proposition}
	Let $\mathbf P=(P,\le,{}^*,0,1)$ be a pseudocomplemented poset and $F$ a filter of $\mathbf P$. Then any of the following statements implies the next one:
	\begin{enumerate}[{\rm(i)}]
		\item $F$ is a prime $D$-filter of $\mathbf P$,
		\item $x\in P\setminus F$ implies $x^*\in F$,
		\item $F$ is a maximal filter of $\mathbf P$.
	\end{enumerate}
\end{proposition}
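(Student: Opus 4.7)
The plan is a short two-step chase: (i) $\Rightarrow$ (ii) reads off Theorem~\ref{th4}, and (ii) $\Rightarrow$ (iii) is a brief extension argument using Lemma~\ref{lem2}. No new algebra is required beyond these two already-established facts.

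For (i) $\Rightarrow$ (ii), I would observe that being a prime $D$-filter is precisely the setting and hypothesis (i) of Theorem~\ref{th4}, whose equivalence (i)~$\Leftrightarrow$~(ii) states that $x\in F$ if and only if $x^*\notin F$ for every $x\in P$. Taking the contrapositive of the backward direction yields $x\in P\setminus F$ implies $x^*\in F$, which is statement (ii) of the proposition verbatim.

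For (ii) $\Rightarrow$ (iii), I would assume $F$ satisfies (ii) and is proper (as required by the conclusion). Let $G$ be any proper filter of $\mathbf P$ with $F\subseteq G$; the goal is $F=G$. If instead $F\subsetneq G$, pick $x\in G\setminus F$; then $x\in P\setminus F$, so (ii) yields $x^*\in F\subseteq G$. Thus $\{x,x^*\}\subseteq G$, contradicting Lemma~\ref{lem2}\,(i), which forbids any proper filter from containing both an element and its pseudocomplement. Hence $F=G$, so $F$ is maximal.

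I do not expect a serious obstacle; the only subtlety is a bookkeeping one. The condition in (ii) is vacuous when $F=P$, so strictly speaking (ii) alone does not force properness, whereas (iii) does. In context the chain of implications is clearly intended for proper $F$, and I would simply carry that standing assumption through the argument without further comment.
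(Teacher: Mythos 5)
Your proposal is correct and matches the paper's proof essentially verbatim: (i) $\Rightarrow$ (ii) is read off from Theorem~\ref{th4}, and (ii) $\Rightarrow$ (iii) proceeds by taking a proper filter $G\supsetneq F$, picking $a\in G\setminus F$, and deriving $\{a,a^*\}\subseteq G$ in contradiction with Lemma~\ref{lem2}. Your remark about properness of $F$ being implicitly assumed is a fair observation but does not signal any divergence from the paper's argument.
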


\begin{proof}
	$\text{}$ \\
	(i) $\Rightarrow$ (ii): \\
	This follows from Theorem~\ref{th4}. \\
	(ii) $\Rightarrow$ (iii): \\
	Assume that (ii) holds and (iii) does not hold. Then there exists some proper filter $G$ of $\mathbf P$ strictly including $F$. Choose $a\in G\setminus F$. Since $a\in P\setminus F$ we conclude $a^*\in F$ by (ii). But then $a,a^*\in G$ contradicting the fact that $G$ is proper (see Lemma~\ref{lem2}).
\end{proof}

Hence every prime $D$-filter of a pseudocomplemented poset is maximal.

The last kind of filters generalized from Sambasiva Rao's paper \cite{SR} to pseudocomplemented posets is defined as follows.

\begin{definition}
	A filter $F$ of a pseudocomplemented poset $\mathbf P=(P,\le,{}^*,0,1)$ is called a {\em median filter} of $\mathbf P$ if $\overline x\not\subseteq F$ for all $x\in F$.
\end{definition}

We can characterize median filters as follows.

\begin{proposition}
	Let $\mathbf P=(P,\le,{}^*,0,1)$ be a pseudocomplemented poset and $F$ a prime $D$-filter of $\mathbf P$. Then the following are equivalent:
	\begin{enumerate}[{\rm(i)}]
		\item $F$ is median,
		\item for all $x\in P$, $x\in F$ if and only if $\overline x\not\subseteq F$,
		\item for all $x\in P$, $x^{**}\in F$ implies $\overline x\not\subseteq F$.
	\end{enumerate}
\end{proposition}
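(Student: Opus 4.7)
The plan is to prove the implications (i)~$\Rightarrow$~(ii)~$\Rightarrow$~(iii)~$\Rightarrow$~(i) and thereby obtain the equivalence. I will rely heavily on Theorem~\ref{th4} for the fact that a prime $D$-filter $F$ satisfies both $x\in F\Leftrightarrow x^{**}\in F$ and the implication $x\in P\setminus F\Rightarrow\overline x\subseteq F$.

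For (i)~$\Rightarrow$~(ii), let $a\in P$. If $a\in F$, then by the very definition of median filter $\overline a\not\subseteq F$, which is one direction. Conversely, suppose $\overline a\not\subseteq F$; I want to deduce $a\in F$. Arguing contrapositively, if $a\notin F$, then $a\in P\setminus F$, and since $F$ is a prime $D$-filter, Theorem~\ref{th4}~(v) gives $\overline a\subseteq F$, contradicting our assumption. So $a\in F$.

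For (ii)~$\Rightarrow$~(iii), assume (ii) holds and $a^{**}\in F$. Because $F$ is a prime $D$-filter, Theorem~\ref{th4}~(iii) yields $a\in F$; now (ii) directly gives $\overline a\not\subseteq F$.

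For (iii)~$\Rightarrow$~(i), let $a\in F$. Since $a\le a^{**}$ and $F$ is a filter, $a^{**}\in F$. Applying (iii) gives $\overline a\not\subseteq F$, and since this holds for every $a\in F$, the filter $F$ is median by definition.

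I do not anticipate any real obstacle here: the only subtlety is that the converse half of (i)~$\Rightarrow$~(ii) is the one place where the prime $D$-filter hypothesis is genuinely needed (via Theorem~\ref{th4}~(v)), whereas (ii)~$\Rightarrow$~(iii) and (iii)~$\Rightarrow$~(i) only require the weaker property that $x\in F$ is equivalent to $x^{**}\in F$, which again follows from the prime $D$-filter hypothesis through Theorem~\ref{th4}~(iii).
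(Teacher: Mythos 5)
Your proof is correct and follows essentially the same route as the paper: the same cycle (i)~$\Rightarrow$~(ii)~$\Rightarrow$~(iii)~$\Rightarrow$~(i), with the backward half of (i)~$\Rightarrow$~(ii) obtained from the fact that a prime $D$-filter satisfies $\overline x\subseteq F$ for $x\notin F$, and the other two implications from $x\in F\Leftrightarrow x^{**}\in F$ together with $x\le x^{**}$. No gaps.
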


\begin{proof}
	Let $a\in P$. \\
	(i) $\Rightarrow$ (ii): \\
	If $a\in F$ then $\overline a\not\subseteq F$ because of (i), and if $a\not\in F$ then $\overline a\subseteq F$ according to Lemma~\ref{lem5}. \\
	(ii) $\Rightarrow$ (iii): \\
	Assume $a^{**}\in F$. Then $a\in F$ according to Theorem~\ref{th4}. By (ii) this implies $\overline a\not\subseteq F$. \\
	(iii) $\Rightarrow$ (i): \\
	Assume $a\in F$. Since $a\le a^{**}$ and $F$ is a filter of $\mathbf P$ we have $a^{**}\in F$. According to (iii) this implies $\overline a\not\subseteq F$.
\end{proof}

Finally, we show some properties of median $D$-filters.

\begin{theorem}
	Let $\mathbf P=(P,\le,{}^*,0,1)$ be a pseudocomplemented poset and $F$ a median $D$-filter of $\mathbf P$. Then the following holds:
	\begin{enumerate}[{\rm(i)}]
		\item If $F$ is prime then for all $x\in P$, $x\in F$ if and only if $\overline{\overline x}\subseteq F$,
		\item $F$ is coherent.
	\end{enumerate}
\end{theorem}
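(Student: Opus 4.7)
The arguments for both parts center on the Galois correspondence recorded in the Remark following the definitions of $\overline A$ and $A^D$, namely $A\subseteq\overline B\Leftrightarrow B\subseteq\overline A$, with its consequence $A\subseteq\overline{\overline A}$ and the inclusion-reversing nature of $\overline{\cdot}$.

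For (i), the backward implication $\overline{\overline x}\subseteq F\Rightarrow x\in F$ is immediate from $x\in\overline{\overline x}$. For the forward implication I would take $x\in F$ and any $y\in\overline{\overline x}$; by the Galois correspondence this is equivalent to $\overline x\subseteq\overline y$. Assuming for contradiction $y\notin F$, the prime $D$-filter hypothesis licences an application of Theorem~\ref{th4}(v) to conclude $\overline y\subseteq F$. Chaining gives $\overline x\subseteq\overline y\subseteq F$, contradicting the median property at $x\in F$. Hence $y\in F$, proving $\overline{\overline x}\subseteq F$.

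For (ii), let $x\in F$ and $\overline x=\overline y$; we must show $y\in F$. Since $\overline x=\overline y$ implies $\overline{\overline x}=\overline{\overline y}$ and $y\in\overline{\overline y}$, we have $y\in\overline{\overline x}$. It therefore suffices to establish $\overline{\overline x}\subseteq F$ for every $x\in F$---the same conclusion as in (i), but now without the primality assumption. I would try to repeat the contradiction of (i), replacing the appeal to Theorem~\ref{th4}(v) by a hands-on argument exploiting the median witness: pick $w\in\overline x\setminus F$ supplied by the median property, note that $w\in\overline y$ via $\overline x=\overline y$, and then try to force a contradiction from $y\notin F$ by combining $D\subseteq F$ with Lemma~\ref{lem2}(ii) (``$a^*\in F$ and $b\in\overline a$ imply $\overline b\not\subseteq F$'').

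The main obstacle, and the bulk of the technical work, is exactly the derivation of the contradiction in the previous paragraph: without primality the clean implication ``$y\notin F\Rightarrow\overline y\subseteq F$'' from Theorem~\ref{th4}(v) is unavailable, so one must instead propagate the median witness through the Galois correspondence and combine it with the $D$-filter structure, aiming either to produce a second witness forcing $\overline z\subseteq F$ for some $z\in F$ (contradicting median) or to deduce $0\in F$ (contradicting properness, via Lemma~\ref{lem2}(i)). This is the step where I would expect to spend most of the effort.
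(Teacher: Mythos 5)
Your argument for part (i) is correct and is essentially the paper's: the paper also fixes the median witness $c\in\overline a\setminus F$, notes $\overline a\subseteq\overline b$ for $b\in\overline{\overline a}$, and invokes Theorem~\ref{th4}(v); the only difference is that the paper applies Theorem~\ref{th4}(v) to the witness $c$ (getting $b\in\overline{\overline b}\subseteq\overline c\subseteq F$ directly), whereas you apply it to the hypothetical $y\notin F$ and contradict the median property. Both are sound and use the same ingredients.

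Part (ii), however, is not proved: you explicitly leave open the step that derives a contradiction from $y\notin F$, and the route you sketch (establishing $\overline{\overline x}\subseteq F$ for $x\in F$ without primality, via Lemma~\ref{lem2}(ii) and $D\subseteq F$) does not close with the tools you name. The paper's proof of (ii) does not pass through $\overline{\overline x}$ at all. It goes: given $a\in F$ with median witness $c\in\overline a\setminus F$ and $b\in P$ with $\overline a=\overline b$, one has $c\in\overline b\subseteq b^D$ by Lemma~\ref{lem3}(i), hence $U(b,c)\subseteq D\subseteq F$ since $F$ is a $D$-filter, and then primality of $F$ together with $c\notin F$ forces $b\in F$. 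The key move you are missing is the passage from $\overline b$ to $b^D$, which converts membership of the median witness in $\overline b$ into the concrete condition $U(b,c)\subseteq D$ that the $D$-filter and primality hypotheses can act on. Note also that the paper's proof of (ii) invokes primality even though it is absent from the hypothesis of (ii) as stated, so your suspicion that the primality-free claim is the hard (indeed, unsupported) part is well founded; but as written your proposal does not constitute a proof of (ii) under either reading.
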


\begin{proof}
	Let $a\in F$. Since $F$ is median there exists some $c\in\overline a\setminus F$.
	\begin{enumerate}[(i)]
		\item Assume $F$ to be prime. Let $b\in\overline{\overline a}$. Then $c\in\overline a\subseteq\overline b$. Since $c\in P\setminus F$ we have $\overline c\subseteq F$ according to Theorem~\ref{th4}. Hence $b\in\overline{\overline b}\subseteq\overline c\subseteq F$. This shows $\overline{\overline a}\subseteq F$. That for $x\in P$, $\overline{\overline x}\subseteq F$ implies $x\in F$ follows from $x\in\overline{\overline x}$.
		\item Let $b\in P$ and assume $\overline a=\overline b$. Then $c\in\overline a=\overline b\subseteq b^D$ according to Lemma~\ref{lem3}. This implies $U(b,c)\subseteq D\subseteq F$. Since $F$ is prime and $c\not\in F$ we conclude $b\in F$.
	\end{enumerate}
\end{proof}

Authors' addresses:

Ivan Chajda \\
Palack\'y University Olomouc \\
Faculty of Science \\
Department of Algebra and Geometry \\
17.\ listopadu 12 \\
771 46 Olomouc \\
Czech Republic \\
ivan.chajda@upol.cz

Helmut L\"anger \\
TU Wien \\
Faculty of Mathematics and Geoinformation \\
Institute of Discrete Mathematics and Geometry \\
Wiedner Hauptstra\ss e 8-10 \\
1040 Vienna \\
Austria, and \\
Palack\'y University Olomouc \\
Faculty of Science \\
Department of Algebra and Geometry \\
17.\ listopadu 12 \\
771 46 Olomouc \\
Czech Republic \\
helmut.laenger@tuwien.ac.at

\end{document}